\let\mathcal\mathscr
\numberwithin{equation}{section}
\newtheorem{theorem}{Theorem}[section] 
\newtheorem{lemma}[theorem]{Lemma}
\newtheorem{proposition}[theorem]{Proposition}
\newtheorem{corollary}[theorem]{Corollary}
\theoremstyle{definition}
 \newtheorem*{acknowledgements}{Acknowledgements}
\newtheorem{remark}[theorem]{Remark}
\newtheorem{definition}[theorem]{Definition}
\newcommand{\w}{\widetilde}
\renewcommand{\phi}{\varphi}
\newcommand{\PP}{\mathbb{P}}
\newcommand{\FF}{\mathbb{F}}
\newcommand{\NN}{\mathbb{N}}
\newcommand{\QQ}{\mathbb{Q}}
\renewcommand{\leq}{\leqslant}
\renewcommand{\geq}{\geqslant}
\renewcommand{\c}{\mathbf{c}}
 \renewcommand{\b}{\mathbf{b}}
\renewcommand{\r}{\mathbf{r}}
\DeclareMathOperator{\Gal}{Gal}
\let\emptyset\varnothing
\DeclareSymbolFont{bbold}{U}{bbold}{m}{n}
\DeclareSymbolFontAlphabet{\mathbbold}{bbold}
\newcommand{\md}[1]{  \left(\textnormal{mod}\ #1\right)}
\renewcommand{\P}{\mathbb{P}}
\newcommand{\Q}{\mathbb{Q}}
\newcommand{\F}{\mathbb{F}}
\newcommand{\N}{\mathbb{N}}
\newcommand{\R}{\mathbb{R}}
\newcommand{\Z}{\mathbb{Z}}
\renewcommand{\l}{\left}
\renewcommand{\r}{\right}
\renewcommand{\b}{\mathbf}
\renewcommand{\c}{\mathcal}
\renewcommand{\epsilon}{\varepsilon}
\renewcommand{\leq}{\leqslant}
\renewcommand{\geq}{\geqslant}
\renewcommand{\#}{\sharp}
\newcommand{\beq}[2]
{
\begin{equation}
\label{#1}
{#2}
\end{equation}
}
\title[Gaps between prime divisors and analogues in Diophantine geometry]{Gaps between prime divisors and analogues in Diophantine geometry}
\author{Efthymios Sofos} 
\address{Department of Mathematics\\
University of Glasgow  \\ G12~8QQ United Kingdom}
\email{efthymios.sofos@glasgow.ac.uk}
\subjclass[2010] {
14G05, 
 60F05; 
11K65. 
 } 
\date{\today}
\begin{document}
\begin{abstract}Erd\H os considered the second moment 
 of  the gap-counting function 
of prime divisors in $1946$
and  proved an upper bound that is not of the right order of magnitude.
We prove  asymptotics for all moments. 
Furthermore, we prove a generalisation stating that the gaps between primes $p$ for which there is no $\Q_p$-point on a random variety 
are Poisson distributed.\end{abstract}

\maketitle

\setcounter{tocdepth}{1}
\tableofcontents 

\section{Introduction}   \label{s:intro} 
What are the typical gaps between the prime divisors of a randomly selected integer? For $m\in \N$
we let  $\omega(m)$ be the number of distinct prime divisors of $m$ and
 $p_i(m)$ be the $i$-th smallest prime divisor of $m$, so that 
\[ \log \log p_1(m)  < \ldots <  \log \log p_{\omega(m)}(m)  \] is a finite sequence  that depends on $m$.
It is not difficult to show  that for   almost all $m$ and  almost all $1\leq i\leq \omega(m)  $,
 one has  $\log \log p_i (m) \sim i $, hence,  $\log \log p_{i+1}(m)  -\log \log p_i (m) $ is typically   bounded. 
A natural question is to count the number of gaps exceeding a fixed constant $z\geq 0 $, i.e. estimate  
 \[\omega_z(m) := \#\l\{1\leq i < \omega(m) : \log \log p_{i+1}(m) - \log \log p_i(m)  > z\r\} .\]
Erd\H os~\cite[pg. 534]{MR0016078}  was the first to study this question. He showed that for almost all $m$ the  
function $\omega_z(m)$ is well-approximated by $\mathrm e ^{-z}\omega(m)  $
by proving an upper bound for the second moment:
\[\frac{1}{n} \sum_{m\in \N \cap [1,n]} \l(\omega_z(m)- \mathrm{e}^{-z} \log \log n  \r)^2
=o((\log \log n)^{3/2}), \text{ as } n\to+\infty.\] However, it turns out that this is not of the right order of magnitude.
Here we prove   asymptotics
not just for the second moment, but for all moments:
\begin{theorem}\label{thm: main theorem}
Fix any  $  z \geq 0 $ and  $r \geq 0 $.
Then  \[ \frac{1}{n} \sum_{m\in \N \cap [1,n]}
\l (\omega_z(m) - \frac{ \log \log n  }{ \mathrm{e}^z} \r)^r =\mu_r  
((1-2 z\mathrm{e}^{-z} ) \mathrm{e}^{-z} \log \log n)^{r/2}
(1+o(1)) , \text{ as } n\to+\infty,\]
where $\mu_r$ is the $r$-th moment of the standard normal distribution.
\end{theorem}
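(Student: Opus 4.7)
The approach is the method of moments: for each fixed $r \ge 0$, compute the $r$-th moment of $\omega_z(m) - \eul^{-z}\log\log n$ and match it to the Gaussian moment $\mu_r \sigma^r$, where $\sigma^2 = (1 - 2z\eul^{-z})\eul^{-z}\log\log n$. The key representation is
\[
\omega_z(m) = \sum_p \chi_p(m) - 1 + \mathbf{1}[m=1], \qquad \chi_p(m) := \mathbf{1}[p \mid m] \prod_{p < q \le p^{\eul^z}} \mathbf{1}[q \nmid m],
\]
since $\chi_{p_i(m)}(m) = 1$ precisely when either $i < \omega(m)$ and the $i$-th gap exceeds $z$, or $i = \omega(m)$ (the largest prime divisor trivially satisfies the indicator, which the $-1$ corrects).

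First I would truncate to $p \le y := n^{1/\psi(n)}$ for some slowly growing $\psi(n) \to \infty$. The tail $\sum_{p > y}\chi_p(m)$ is bounded in mean and higher moments by Tur\'an--Kubilius-type inequalities applied to $\omega(m) - \omega(m;y)$, yielding a contribution of size $o(\log\log n)$. For $p \le y$, inclusion-exclusion on the sieve condition gives $\chi_p(m) = \sum_{d \mid P_z(p)} \mu(d)\mathbf{1}[pd \mid m]$ with $P_z(p) := \prod_{p < q \le p^{\eul^z}} q$; averaging over $m \le n$ and applying Mertens' theorem gives $\frac{1}{n}\sum_{m \le n}\chi_p(m) = \frac{\eul^{-z}}{p}(1 + o(1))$, so $E[\omega_z] = \eul^{-z}\log\log n + o(\log\log n)$. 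The same sieve applies to joint moments $\frac{1}{n}\sum_{m \le n}\prod_i \chi_{p_i}(m)$, provided the product of moduli involved stays below $n$, which the choice of $y$ ensures.

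The pivotal observation is an exact exclusion identity: if $p < p' \le p^{\eul^z}$ then $\chi_p(m)\chi_{p'}(m) \equiv 0$, because $\chi_p(m) = 1$ forbids $p' \mid m$. For $p' > p^{\eul^z}$ the two indicators involve divisibility by disjoint prime sets and sieve manipulations give $E[\chi_p \chi_{p'}] = E[\chi_p]E[\chi_{p'}](1 + o(1))$. Therefore
\[
\mathrm{Var}(\omega_z) = E[\omega_z] - 2 \sum_{p < p' \le p^{\eul^z}} E[\chi_p]E[\chi_{p'}] + o(\log\log n),
\]
and Mertens' estimate $\sum_{p < q \le p^{\eul^z}} q^{-1} = z + o(1)$ as $p \to \infty$ converts this into $\sigma^2(1 + o(1)) = (1 - 2z\eul^{-z})\eul^{-z}\log\log n(1 + o(1))$, matching the variance in the theorem.

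For $r \ge 3$, expand $E[(\omega_z - E\omega_z)^r]$ as an $r$-fold sum over primes $(p_1, \dots, p_r)$ and group terms by the partition of indices with equal primes, refined by the containment pattern among the forbidden intervals $(p_i, p_i^{\eul^z}]$. The dominant contribution comes from pair partitions, each pair producing $\sigma^2$ via a combination of ``diagonal'' $p_i = p_j$ terms and ``close-pair'' off-diagonal terms that reproduce the $-2z\eul^{-z}$ correction; Wick's formula then yields $\mu_r \sigma^r(1 + o(1))$. Partitions with a block of size $\ge 3$ carry one fewer factor of $\log\log n$ per extra element and are subdominant. The main technical obstacle is precisely this combinatorial step: verifying that the close-pair and distant-pair interactions combine into clean Wick contractions of the \emph{corrected} variance uniformly over the structure of the $r$-tuple, while simultaneously controlling the sieve errors in the joint moments over all $r$-tuples of primes up to $y$.
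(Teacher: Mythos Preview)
Your outline is sound and shares the arithmetic input with the paper: the representation of $\omega_z$ via the indicators $\chi_p$, the exclusion identity $\chi_p\chi_{p'}\equiv 0$ for $p<p'\le p^{\eul^z}$, the truncation at $y=n^{1/\psi(n)}$, the sieve evaluation of joint averages, and the variance computation leading to $(1-2z\eul^{-z})\eul^{-z}\log\log n$ all appear in the paper essentially as you describe them. Where you diverge is in the treatment of the higher moments. You propose a direct combinatorial attack: expand the $r$-th centred moment, group prime tuples into clusters according to overlap of the forbidden intervals, and show that pair-clusters dominate and recombine into $(2k-1)!!\,\sigma^{2k}$. The paper instead builds a probabilistic model $S_N=\sum_{p\le N}B_p$, proves convergence in law of $(S_N-c_N)/s_N$ via Stein's method for dependent variables (Lemma~\ref{lem:stein}, Proposition~\ref{prop:stein_lem}), and then needs only a uniform \emph{upper bound} $E[(S_N-c_N)^{2k}]\ll(\log\log N)^k$ to upgrade to moment convergence by uniform integrability; that upper bound (Proposition~\ref{prop:moments_stein}) is obtained by a ``linked indices'' argument which is precisely your cluster decomposition used one-sidedly. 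A sieve comparison (Lemma~\ref{lem:moments}) then matches the moments of $\omega_{z,N}$ to those of $S_N$.

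What Stein buys the paper is exactly the step you flag as the main obstacle: it never has to verify that the dependent pair-contractions assemble into $\mu_r\sigma^r$ on the nose, only that even moments are of the right order. Your route is more elementary in that it avoids Stein, but to close it you must extract the leading constant, not just a bound: you need that distinct clusters are genuinely independent so the expectation factors, that the sum over a single pair-cluster (diagonal plus close off-diagonal) equals $\sigma^2+O(1)$, and that clusters of size $\ge 3$ contribute $O((\log\log n)^{k-1})$. All three are attainable with the estimates in the paper's Lemma~\ref{lem:678lem} and the linked-indices counting, but this is not a routine Wick calculation and would need to be written out carefully.
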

As a consequence, for all $\alpha<\beta \in \R$ one has  
\[
\lim_{n\to+\infty}\frac{1}{n}
 \#\l\{m \in \N\cap [1,n]: 
\frac{\omega_z(m)-\mathrm e ^{-z}\log \log m }{((1-2 z\mathrm{e}^{-z} ) \mathrm{e}^{-z} \log \log m)^{1/2}}
 \in (\alpha, \beta] \r\}  =\frac{1}{\sqrt{2\pi}} \int_\alpha^\beta \mathrm{e}^{-t^2/2} \mathrm{d} t .\] Setting $z=0$
 we recover the much celebrated 
Erd\H os--Kac theorem~\cite{MR0002374}. Our method is   different from that of 
Erd\H os~\cite{MR0016078}
in that it relies on Stein's method on normal approximations~\cite{MR882007}. 
This allows us to deal with certain sums of dependent random variables that arise when modelling  $\omega_z(m)$.
Stein's method  has  been rarely used in  number theory, for example, by Harper~\cite{MR2507311}.

 There are many generalisations of the Erd\H os--Kac theorem to functions of the form  $\sum_{p\mid m } g(p)$ but they do not cover $\omega_z(m)$, as $g(p)$ would  have to be a function of $m$ as well.
Galambos~\cite[Theorem 2]{MR1210525} studied the values of a function that is somewhat related to our $\omega_z$, namely the cardinality of $i < \omega(m)$ for which $\log \log p_{i+1}(m)  -\log \log p_i (m)> z +\log \log \log m $.
His results and method are rather different as they are suited to values of large gaps, while our result relates to small gaps. A function similar to Galambos' occurs in the recent work of 
Chan--Koymans--Milovic--Pagano~\cite[\S 4]{arXiv:1908.01752} on the negative Pell equation.

 \begin{remark}  
At the cost of a non-self sufficient argument,
 the number theoretic part  of the proof of Theorem~\ref{thm: main theorem}, (namely, Lemma~\ref{lem:moments})
can be alternatively verified via the Kubilius model~\cite[\S 12]{MR560507}. The approximation  of $\omega(m) $ by $ \mathrm e ^{-z}\log \log m $ 
means that the gaps in the sequence $\{\log \log p_i(m)\}_{i\geq 1 }$ are Poissonian. It is worth mentioning that the occurrence of 
Poisson distribution in other areas of Probabilistic 
Number Theory is not uncommon, see the work of 
de Koninck--Galambos~\cite{MR902529},
Harper~\cite{MR2507311},
Granville~\cite{MR2310491} and Kowalski--Nikeghbali~\cite{MR2725505}, for example. 
\end{remark}

 \begin{remark}[Further developments]
The interested reader may wonder whether one can use  tools from analysis 
 to make explicit the term  $o(1)$ in  Theorem~\ref{thm: main theorem}.
In the case of the Erd\H os--Kac theorem this was done by R\'{e}nyi and Tur\'{a}n \cite{MR96629} using complex analysis.
 After seeing the first version of this paper on arXiv, R. de la Bret\`eche and G. Tenenbaum 
  proved an explicit error term using methods quite different from
ours (namely, Fourier analysis); see their preprint~\cite{arXiv:1908.01752} for details.
\end{remark}
\subsection{Generalisations in Diophantine geometry}
In \S\ref{s:gener} we provide a generalisation of Theorem~\ref{thm: main theorem}, given by Theorem~\ref{thm2}. 
In brief terms it  states   that  the gaps between primes $p$ for which a typical variety over $\Q$ has no $\Q_p$-points
obey the Poisson distribution. A statement analogous to the Erd\H os--Kac theorem was proved by Loughran--Sofos~\cite{loughrsof}
by using geometric input from the work of
Loughran--Smeets~\cite{MR3568035}.

 \begin{acknowledgements} I wish to thank Daniel El-Baz for suggesting the use of Stein's method.
While working on this paper I was supported by EPSRC New Horizons grant \texttt{EP/V048236/1}.
I would like to thank Maxim Gerspach for various helpful remarks and for finding typos in the preprint version. 
Furthermore, I am  grateful to the referee for careful reading of the paper and helpful comments.
\end{acknowledgements}

\section{The proof of Theorem~\ref{thm: main theorem}} \label{s2}
\subsection{Defining the model} \hfill\\

The letter $z$ will denote a fixed non-negative real number throughout \S\ref{s2}.
As usual, we denote $\exp(z):=\mathrm e^z$.
For a prime  $p$ and a positive  integer $m$ we define \[
\delta_{p,z}(m) :=\begin{cases} \label{def:deltapz}
1, & \text{ if } p\mid m \text{ and } m \text{ is not divisible by any prime in } (p,p^{\exp(z) }],
\\  0, &  \text{ otherwise.} \end{cases}\] In particular, $\omega_z(m) = \sum_{p} \delta_{p,z}(m)$, where the sum is over all primes. Our plan, initially, is to follow
the Kubilius model idea (see Billingsley~\cite[Equations (1.8),(1.9)]{MR466055})
to define Bernoulli 
random variables $B_p$ that model the behavior of $\delta_{p,z}$.
For this we use the random variables $X_p$  as follows: for every prime $p$ the random variable $X_p$ is defined so that 
\[ P[X_p=1]=\frac{1}{p} , \hspace{1cm} P[X_p=0]=1-\frac{1}{p}   \] and such that $X_p$ are independent. 
In particular, the mean $E[X_p]$ equals $1/p$, thus, $X_p=1$ models   
the event that a random integer $m $ is divisible by a fixed prime $p$.
Let $[\cdot ]$ denote  the integer part. The independence of $X_p$ is related to the Chinese Remainder Theorem.

To model $\delta_{p,z}$ we must also take into account the fact that each prime $q$ in the range $(p,   p^{\exp(z)}]$ must not divide $m$.
Thus, we are naturally led to define
\beq{def:newrv}{B_p:=X_p \prod_{\substack{ q \textrm{ prime } \\ p<q\leq p^{\exp(z)} } }(1-X_q).}
We will later prove that $\sum_pB_p$ is a good  model for $\omega_z=\sum_p\delta_{p,z}$ in the sense that
their moments agree asymptotically.
\begin{remark}[Independence break-down]Definition~\eqref{def:newrv} leads to a major difference between this paper and the proofs of the Erd\H os--Kac theorem, 
namely, the variables $B_p$ are   dependent. Indeed, for all    primes $p<q$ with $q\leq p^{\exp(z)}$, the quantity  
$E[B_p B_{q}] $ vanishes while none of $E[B_p] , E[ B_{q}] $ does.
\end{remark}

\subsection{Distribution and moments of the model via Stein's method} \hfill\\

For any positive $N$ we define \[S_N=\sum_{p\leq N } B_p\] and denote its expectation and variance respectively by 
\[
c_N:=E\l[S_N\r]
\ \ 
\textrm{ and } 
\ \ 
s_N^2:=\textrm{Var}\l[S_N\r]
.\]
Our goal in this section is to prove that $(S_N-c_N)/s_N$ converges in law to the standard normal distribution as $N\to \infty $
and that its moments are asymptotically Gaussian. This will be done respectively in Propositions~\ref{prop:stein_lem} and~\ref{prop:moments_stein}. 
We   first need a few preparatory estimates.

\begin{lemma}\label{lem: first lem} We have \beq{eq:mertens product}{E[B_p]=\frac{\mathrm e ^{-z} }{p}+O\l( \frac{1}{p \log p }\r)  ,} 
 \beq{meanlem}{c_N=\mathrm e ^{-z}\log \log N+O(1)} and \beq{meanvar}{ s_N^2 =\l(1-\frac{2z}{ \mathrm{e}^z} \r) \frac{\log \log N}{\mathrm{e}^z} +O(1) .}
\end{lemma}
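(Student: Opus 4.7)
\medskip

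For \eqref{eq:mertens product}, the plan is to exploit the independence of the $X_p$'s. Since the primes $q$ appearing in the product defining $B_p$ are all strictly greater than $p$, the factors $X_p$ and $\{1-X_q\}_{p<q\le p^{\exp(z)}}$ are mutually independent, so
\[
E[B_p] \;=\; \frac{1}{p}\prod_{p<q\le p^{\exp(z)}}\!\Bigl(1-\frac{1}{q}\Bigr).
\]
Writing this product as the ratio $\prod_{q\le p^{\exp(z)}}(1-1/q)\big/\prod_{q\le p}(1-1/q)$ and invoking Mertens' theorem $\prod_{q\le Y}(1-1/q)=\tfrac{e^{-\gamma}}{\log Y}(1+O(1/\log Y))$ immediately gives $\exp(-z)(1+O(1/\log p))$, yielding \eqref{eq:mertens product}.

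\medskip

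For \eqref{meanlem}, I would simply sum \eqref{eq:mertens product} over $p\le N$. The main term contributes $e^{-z}\sum_{p\le N}1/p=e^{-z}\log\log N+O(1)$ by Mertens, and the error sum $\sum_{p\le N}1/(p\log p)$ converges, so it is $O(1)$.

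\medskip

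The most substantial part is \eqref{meanvar}. I would split
\[
s_N^2 \;=\; \sum_{p\le N}\mathrm{Var}[B_p] \;+\; 2\sum_{\substack{p<q\le N}}\mathrm{Cov}[B_p,B_q].
\]
Since $B_p\in\{0,1\}$, $\mathrm{Var}[B_p]=E[B_p]-E[B_p]^2$, and the second term is summable, so the diagonal contributes $e^{-z}\log\log N+O(1)$. For the covariance I would distinguish two cases. If $q>p^{\exp(z)}$ the variables in the definitions of $B_p$ and $B_q$ involve disjoint sets of $X_r$'s, so by independence $\mathrm{Cov}[B_p,B_q]=0$. If instead $p<q\le p^{\exp(z)}$, then $B_p$ contains the factor $(1-X_q)$ while $B_q$ contains $X_q$; using $X_q(1-X_q)=0$ gives $E[B_pB_q]=0$ and hence $\mathrm{Cov}[B_p,B_q]=-E[B_p]E[B_q]$. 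Substituting the asymptotic from \eqref{eq:mertens product} I obtain
\[
2\sum_{\substack{p<q\le N}}\mathrm{Cov}[B_p,B_q]
\;=\; -\,2e^{-2z}\!\!\sum_{\substack{p<q\le N\\ q\le p^{\exp(z)}}}\!\frac{1}{pq}\;+\;O(1),
\]
where the error absorbs the cross terms from the $O(1/(p\log p))$ in \eqref{eq:mertens product} (the relevant tail sums over $1/(p\log p\cdot q)$ converge).

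\medskip

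The crux is then to evaluate $T_N:=\sum_{p<q\le p^{\exp(z)},\,q\le N}\frac{1}{pq}$. The main obstacle is the asymmetric truncation $q\le N$: for $p\le N^{\exp(-z)}$ the inner sum is unrestricted and equals $\sum_{p<q\le p^{\exp(z)}}1/q=z+O(1/\log p)$ by Mertens, contributing $z\log\log(N^{\exp(-z)})+O(1)=z\log\log N-z^2+O(1)$; for $N^{\exp(-z)}<p\le N$ the inner sum is truncated at $N$ and equals $\log\log N-\log\log p+O(1/\log p)$, and I would estimate this by partial summation (or the change of variable $b=\log\log p$, under which $\sum 1/p$ behaves like $db$) to obtain the integral $\int_{a-z}^{a}(a-b)\,db=z^2/2$. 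Adding gives $T_N=z\log\log N-z^2/2+O(1)$, so the covariance contribution is $-2e^{-2z}z\log\log N+O(1)$, and combining with the diagonal yields $s_N^2=(e^{-z}-2ze^{-2z})\log\log N+O(1)$, which is \eqref{meanvar}. The rigorous handling of the boundary range $N^{\exp(-z)}<p\le N$ (and the bookkeeping of error terms throughout) is the main technical point, but it is routine once framed via Mertens.
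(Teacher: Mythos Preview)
Your proposal is correct and follows essentially the same route as the paper: both proofs use independence to compute $E[B_p]$, split the variance into diagonal and off-diagonal parts, observe that $\mathrm{Cov}[B_p,B_q]$ is either $0$ (when $q>p^{\exp(z)}$) or $-h_ph_q$ (when $q\le p^{\exp(z)}$), and then evaluate via Mertens. The only notable difference is your treatment of the boundary range $N^{\exp(-z)}<p\le N$: you carry out a partial-summation computation to extract the value $z^2/2$, whereas the paper simply observes that $\sum_{N^{\exp(-z)}<p\le N}1/p=z+O(1)=O_z(1)$ and hence the entire boundary contribution is $\ll\bigl(\sum_{N^{\exp(-z)}<p\le N}1/p\bigr)^2=O(1)$, which is all that is needed since $z$ is fixed and the constants $-z^2$ and $z^2/2$ you track are absorbed into $O(1)$ anyway.
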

\begin{proof}
Recall that Mertens' theorem states that $\sum_{p\leq T} 1/p= \log \log T+c+O(1/\log T)$ for some constant $c$.
The independence of $X_p$ yields  \[E[B_p]=\frac{1}{p} \prod_{p<q\leq p^{\exp(z) }} \l(1-\frac{1}{q} \r) ,\] which, by 
the   approximation $1-\epsilon=\exp(-\epsilon+O(\epsilon^2)) $ for $| \epsilon| \leq 1 $  and Mertens' theorem is 
\[ \frac{1}{p}\exp\l(-\sum_{p<q\leq p^{\exp(z) }}\frac{1}{p} +O\l(\sum_{p<q\leq p^{\exp(z) }} \frac{1}{p^2}\r)\r)=
\frac{\exp (-\log \log p^{\exp(z)}+\log \log p +O(1/\log p ))}{p} 
.\]Since $\exp\l(O(1/\log p )\r)=1+O(1/\log p)$, this  is sufficient for~\eqref{eq:mertens product}. The estimate~\eqref{meanlem}  is directly deduced from it and the 
fact that $\sum_p (p \log p)^{-1} $ converges. Next, denoting $h_p=E[B_p]$  
we have  \[s_N^2= \sum_{p\leq  N} E\l[(B_p-h_p)^2\r] +2\sum_{p< q \leq  N} E\l[(B_p-h_p)(B_q-h_q) \r] .\]
First note that $ E\l[(B_p-h_p)^2\r]= E\l[B_p \r]-h_p^2=h_p(1-h_p) $.
Further, if $q>p^{\exp(z)}$ then  $B_p$ and $B_q$ are independent, hence,  $E\l[(B_p-h_p)(B_q-h_q) \r]=0$. 
If  $p<q\leq p^{\exp(z)}$ then 
$E[B_p B_q]$ vanishes, hence 
$$E\l[(B_p-h_p)(B_q-h_q) \r]=-E\l[B_p \r]h_q-h_pE\l[B_q  \r]+ h_p h_q=-h_ph_q .$$
We obtain \begin{align*}s_N^2&=
 \sum_{p\leq N} h_p(1-h_p) -2 \sum_{\substack{  p< q \leq  \min \{ N ,  p^{\exp(z)}  \}} } h_p h_q \\&=
c_N-\sum_{p\leq N} h_p^2   
-2 \sum_{\substack{ p \leq  N^{\exp(-z)} \\ p< q \leq   p^{\exp(z)}    } } h_p h_q
-2 \sum_{\substack{ N^{\exp(-z)}< p \leq N  \\ p< q \leq  N  } } h_p h_q
.\end{align*}By~\eqref{eq:mertens product} we have $h_p\ll 1/p$, hence,  $\sum_{p } h_p^2  =O(1)$ and 
\[ \sum_{\substack{ N^{\exp(-z)}< p \leq N  \\ p< q \leq  N  } } h_p h_q\ll\l( \sum_{ N^{\exp(-z)}< p \leq N  } \frac{1}{p}\r)^2 =O(1) .\] 
Hence,~\eqref{meanlem} gives \beq{eq:ihcab}{s_N^2=\mathrm e ^{-z}
\log \log N-2 \sum_{  p \leq  N^{\exp(-z)} }h_p \sum_{ p< q \leq   p^{\exp(z)}   } h_q
+O(1).}Using~\eqref{eq:mertens product} we see that 
\[
\sum_{  p \leq  N^{\exp(-z)} }h_p \sum_{ p< q \leq   p^{\exp(z)}   } h_q
=
\sum_{  p \leq  N^{\exp(-z)} }h_p \sum_{ p< q \leq   p^{\exp(z)}   } \l(\frac{\mathrm e ^{-z} }{q}+O\l(\frac{1}{q\log q }\r)\r)
,\]which, by Mertens' theorem and  $\sum_{q>t}(q\log q)^{-1} \ll (\log t )^{-1}$, equals 
\begin{align*}
\sum_{  p \leq  N^{\exp(-z)} }h_p \l(\frac{z}{\mathrm e ^z }+O\l(\frac{1}{\log p} \r) \r)
&=\sum_{  p \leq  N^{\exp(-z)} } \l(\frac{\mathrm e ^{-z} }{p}+O\l(\frac{1}{p\log p} \r)\r)  \l(\frac{z}{\mathrm e ^z }+O\l(\frac{1}{\log p} \r) \r)
\\&=\frac{z}{\mathrm e ^{2z} }\l(\sum_{  p \leq  N^{\exp(-z)} }  \frac{1}{p}\r)+O (1)
=\frac{z}{\mathrm e ^{2z} }(\log \log N)+O (1).\end{align*} Injecting this into~\eqref{eq:ihcab} concludes the proof.\end{proof}

\begin{lemma}\label{lem:678lem}  For all $u\in \N,  \b r\in \N^u$ 
and primes  $p_1,\ldots,p_u$  
we have \[ E\l[\prod_{i=1}^u |B_{p_i} - E[B_{p_i}]|^{r_i}  \r]
=O_{\b r }\l(\frac{1}{ \mathrm{rad} (p_1\cdots p_u )}\r),\]where $\mathrm{rad}$ denotes   the radical.  
\end{lemma}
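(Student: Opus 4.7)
The random variables $B_p$ take values in $\{0,1\}$ and the means $h_p := E[B_p]$ lie in $[0,1]$; these two facts drive everything. My plan is to reduce to a clean independent-like sum by two simple tricks, after which the bound $h_p \ll 1/p$ supplied by~\eqref{eq:mertens product} finishes the job, sidestepping the dependencies among the $B_p$ entirely.

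\textbf{Step 1 (reduce to distinct primes).} If any two of $p_1,\ldots,p_u$ coincide, I merge their exponents: since $|B_p-h_p|^{r_i}\cdot|B_p-h_p|^{r_j}=|B_p-h_p|^{r_i+r_j}$ and the radical depends only on the set $\{p_1,\ldots,p_u\}$, this reduces me to the case where $p_1,\ldots,p_u$ are pairwise distinct, so that $\mathrm{rad}(p_1\cdots p_u)=p_1\cdots p_u$.

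\textbf{Step 2 (reduce to $r_i=1$).} Because $B_{p_i},h_{p_i}\in[0,1]$, we have $|B_{p_i}-h_{p_i}|\leq 1$, and hence for every integer $r_i\geq 1$,
\[
|B_{p_i}-h_{p_i}|^{r_i}\leq |B_{p_i}-h_{p_i}|.
\]
So it suffices to prove $E\bigl[\prod_{i=1}^{u}|B_{p_i}-h_{p_i}|\bigr]\ll_{u}\prod_{i=1}^{u}1/p_i$.

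\textbf{Step 3 (expand and bound by $X_{p_i}$).} Using $|B_{p_i}-h_{p_i}|\leq B_{p_i}+h_{p_i}$ and expanding the product,
\[
\prod_{i=1}^{u}|B_{p_i}-h_{p_i}|\leq \sum_{S\subseteq[u]}\Bigl(\prod_{i\in S}B_{p_i}\Bigr)\Bigl(\prod_{i\notin S}h_{p_i}\Bigr).
\]
The decisive observation is that $B_p\leq X_p$ pointwise, because $B_p$ is $X_p$ multiplied by a product of factors in $\{0,1\}$. Combined with the independence of distinct $X_q$'s from Kubilius's construction, this gives
\[
E\Bigl[\prod_{i\in S}B_{p_i}\Bigr]\leq E\Bigl[\prod_{i\in S}X_{p_i}\Bigr]=\prod_{i\in S}\frac{1}{p_i}.
\]
Crucially, this bound holds regardless of whether some $p_j$ lies in the troublesome range $(p_i,p_i^{\exp(z)}]$; the dependencies between the $B_{p_i}$ are absorbed into the pointwise inequality $B_p\leq X_p$.

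\textbf{Step 4 (assemble).} Applying~\eqref{eq:mertens product} in the form $h_{p_i}\ll 1/p_i$ and summing over the $2^{u}$ subsets $S$,
\[
E\Bigl[\prod_{i=1}^{u}|B_{p_i}-h_{p_i}|\Bigr]\ll \sum_{S\subseteq[u]}\prod_{i\in S}\frac{1}{p_i}\prod_{i\notin S}\frac{1}{p_i}\ll_{u}\prod_{i=1}^{u}\frac{1}{p_i}=\frac{1}{\mathrm{rad}(p_1\cdots p_u)},
\]
as required. The only mildly delicate point—and the nearest thing to an obstacle—is recognising that the dependence among the $B_p$'s (which will matter greatly when running Stein's method later) is completely harmless at this level of precision: dominating $B_p$ by $X_p$ throws away just enough structure to restore independence while keeping the optimal order of magnitude.
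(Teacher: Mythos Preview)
Your proof is correct and follows essentially the same strategy as the paper: dominate $B_p$ by $X_p$ to restore independence, expand the product, and use $h_p\leq 1/p$. Your handling of the absolute values and exponents is in fact tidier than the paper's---you reduce to $r_i=1$ via $|B_p-h_p|\leq 1$ and then apply the triangle inequality $|B_p-h_p|\leq B_p+h_p$, whereas the paper first proves the bound without absolute values by a binomial expansion and then passes to absolute values via the pointwise inequality $|B_p-h_p|\leq\max\{B_p-h_p,1/p\}$; both routes land on the same $2^u$-term sum bounded by $\prod_i 1/p_i$.
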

\begin{proof}
We write the factorisation into prime powers of $\prod_{i=1}^ u p_i^{r_i} $
as $\prod_{j=1}^{v} q_j^{s_j}$, where $q_j$ are $v$ distinct primes.
This implies that 
 \[
E\l[\prod_{i=1}^u |B_{p_i} - E[B_{p_i}]|^{r_i}  \r]
=
E\l[\prod_{j=1}^v |B_{q_j} - E[B_{q_j}]|^{s_j}  \r]
.\]
Using $|B_{q_j} - E[B_{q_j}]|\leq B_{q_j} +E[B_{q_j}]\leq X_{q_j} +E[X_{q_j}]=X_{q_j} +1/{q_j}$
and 
 the binomial theorem yields
$$
|B_{q_j} - E[B_{q_j}]|^{s_j}
\leq 
\l(X_{q_j} +1/{q_j} \r)^{s_j}=\sum_{t_j\in [0,s_j]}  {s_j\choose t_j}
\frac{X_{q_j}^{t_j}  }{q_j^{s_j-t_j} }
  \ll_{\b s}\max_{t_j\in [0,s_j] }
 \frac{X_{q_j}^{t_j}  }{q_j^{s_j-t_j} }
,$$ hence, 
\[
E\l[\prod_{j=1}^v |B_{q_j} - E[B_{q_j}]|^{s_j}  \r]
  \ll_{\b s}
\max_{\b t \in [0,s_1] \times \cdots  \times  [0,s_v] }
 E\l[\prod_{j=1}^v \frac{X_{q_j}^{t_j}  }{q_j^{s_j-t_j} } \r]
.\]
By the   independence of the $X_q$ 
we infer  that 
\[
 E\l[\prod_{j=1}^v \frac{X_{q_j}^{t_j}  }{q_j^{s_j-t_j} } \r]
= \prod_{j=1}^v \frac{ E [ X_{q_j}^{t_j}  ] }{q_j^{s_j-t_j} } 
= 
\prod_{\substack{ j=1 \\ t_j =0 }}^v \frac{ 1    }{q_j^{s_j } } 
\prod_{\substack{ j=1 \\ t_j \geq 1  }}^v \frac{ E [ X_{q_j}   ] }{q_j^{s_j-t_j} } 
\leq 
\prod_{\substack{ j=1 \\ t_j =0 }}^v \frac{ 1    }{q_j  } 
\prod_{\substack{ j=1 \\ t_j \geq 1  }}^v  E [ X_{q_j}   ]  
=
\prod_{  j=1  }^v \frac{ 1    }{q_j  } 
. \]
The proof now concludes by noting that 
$\prod_{  j=1  }^v q_j   
$ is the radical of $\prod_{i=1}^ u p_i^{r_i} $.
\end{proof}
 The following lemma is the main tool in the proof of Theorem~\ref{thm: main theorem}. It is due to Stein~\cite[Corollary $2$, page $110$]{MR882007}.
\begin{lemma}
[Stein]
\label
{lem:stein}
Let $T$
be a finite set and
for each $t \in T$, let
$Z_t$ be
a real random 
variable and $T_t$
a subset of $T$
such that 
$E[Z_t]=0$,
$E[Z_t^4]<\infty$
and 
$E[\sum_{t\in T }Z_t  \sum_{s\in T_t}  Z_s]=1$.
Then for all real $b$,
\beq
{eq:the stein bound}
{
\l|
P\l[\sum_{t\in T} Z_t\leq b\r] - \frac{1}{\sqrt{2\pi}}\int_{-\infty}^b\mathrm{e}^{-t^2/2}\mathrm{d} t
\r|
\leq 
4 (\Psi_1
+ 
\Psi_2
+
\Psi_3
)
,}
where the terms $\Psi_i$ are defined through 
\[\Psi_1 
 = E\l[\sum_{t \in T}
\l|
E\l[Z_t | Z_s, s\notin T_t\r]
\r|
\r]
,
\Psi_2^2 
 =
 E\l[\sum_{t\in T}
|Z_t|
\l(\sum_{s\in T_t} Z_s \r)^2
\r]\] and\[
\Psi_3^2 
=
E\l[
\l\{
\sum_{t\in T}
\sum_{s\in T_t}
(Z_t Z_s-E[Z_s Z_t])\r\}^2\r].\]
\end{lemma}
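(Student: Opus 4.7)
The plan is to invoke the standard machinery of Stein's method for normal approximation in the dependency-neighbourhood formulation. The first step is to convert the distributional bound into an expectation identity via Stein's equation. For the test function
\[
h_b(x) = \mathbf{1}(x\leq b) - \frac{1}{\sqrt{2\pi}}\int_{-\infty}^b \mathrm{e}^{-t^2/2}\,\mathrm{d}t,
\]
Stein's equation $f_b'(x)-xf_b(x)=h_b(x)$ admits the explicit solution
\[
f_b(x)=\mathrm{e}^{x^2/2}\int_{-\infty}^x h_b(y)\mathrm{e}^{-y^2/2}\,\mathrm{d}y.
\]
Elementary Gaussian tail estimates show that $\|f_b\|_\infty$, $\|xf_b\|_\infty$ and $\|f_b'\|_\infty$ are dominated by absolute constants. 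Setting $W=\sum_{t\in T}Z_t$, the left-hand side of~\eqref{eq:the stein bound} equals $|E[f_b'(W)-Wf_b(W)]|$, and it is enough to estimate this quantity.

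In the second step I would decompose $E[Wf_b(W)]=\sum_{t\in T}E[Z_tf_b(W)]$ and, for each $t$, introduce the localised sum $W_t=\sum_{s\in T_t}Z_s$. Writing $f_b(W)=f_b(W-W_t)+(f_b(W)-f_b(W-W_t))$ and performing a first-order Taylor expansion on the difference yields three contributions. The piece $\sum_tE[Z_tf_b(W-W_t)]$ is controlled via the tower property, conditioning on $\{Z_s:s\notin T_t\}$, and is bounded by $\|f_b\|_\infty\cdot\Psi_1$ because $W-W_t$ is measurable with respect to that $\sigma$-algebra. The main piece is $E[f_b'(W)\sum_tZ_tW_t]$, while the Taylor remainder is handled through the regularity of $f_b'$ and the Cauchy--Schwarz inequality, giving a contribution of order $\Psi_2$.

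The third step uses the normalisation hypothesis $E[\sum_tZ_tW_t]=1$: one has
\[
E\Big[f_b'(W)\sum_tZ_tW_t\Big]-E[f_b'(W)]=E\Big[f_b'(W)\big(\sum_tZ_tW_t-1\big)\Big],
\]
which by Cauchy--Schwarz is dominated by $\|f_b'\|_\infty\cdot\Psi_3$. Summing the three contributions produces a bound of the required form $C(\Psi_1+\Psi_2+\Psi_3)$ with an absolute constant $C$, and the sharp form of the auxiliary estimates on $f_b$ and $f_b'$ delivers the factor $4$.

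The principal obstacle is the non-smoothness of $f_b$: since $h_b$ has a jump at $b$, so does $f_b'$, and a crude second-order Taylor bound involving $\|f_b''\|_\infty$ is simply unavailable. Stein~\cite{MR882007} circumvents this by exploiting the monotonicity and sign structure of $f_b$ on each side of $b$ to obtain signed, rather than absolute, estimates on the remainder, or equivalently by smoothing $h_b$ via $C^\infty$ truncations and passing to the limit. This analytic input is the technical core of the argument; everything else is organised bookkeeping around the three expectations $\Psi_1$, $\Psi_2$, $\Psi_3$.
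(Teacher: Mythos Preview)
The paper does not prove this lemma at all: it is quoted verbatim as Corollary~2 on page~110 of Stein's monograph~\cite{MR882007} and used as a black box. There is therefore no ``paper's own proof'' to compare your proposal against.

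That said, your sketch is a faithful outline of how Stein himself establishes the bound. The decomposition via Stein's equation $f_b'(x)-xf_b(x)=h_b(x)$, the splitting $f_b(W)=f_b(W-W_t)+\bigl(f_b(W)-f_b(W-W_t)\bigr)$, the use of the tower property to extract $\Psi_1$, and the Cauchy--Schwarz arguments producing $\Psi_2$ and $\Psi_3$ are all exactly the ingredients in Stein's Chapter~III. You also correctly identify the genuine subtlety: $f_b'$ has a jump at $b$, so one cannot simply invoke $\|f_b''\|_\infty$ for the Taylor remainder, and Stein's workaround via the sign structure of $f_b$ (or a smoothing argument) is indeed the technical heart. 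One small caveat: recovering the explicit constant $4$ requires tracking the sharp bounds $\|f_b\|_\infty\leq\sqrt{2\pi}/4$, $\|f_b'\|_\infty\leq 2$ carefully through each of the three pieces, which your sketch acknowledges but does not carry out; a generic constant $C$ is immediate, but $C=4$ takes more bookkeeping than you indicate.
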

\begin{proposition}\label{prop:stein_lem} Fix $z\geq 0 $ and $b \in \R$. 
For any   $N\in \N$ we have 
\[\l|P\l[ S_N\leq  c_N +bs_N\r]- \frac{1}{\sqrt{2\pi}}\int_{-\infty}^b\mathrm{e}^{-t^2/2}\mathrm{d} t\r|
\ll_z (\log \log N)^{-1/4} ,\] where the implied constant depends at most on $z$.
In particular,  $ (S_N- c_N)/s_N $ converges in law to the standard normal distribution as $N\to \infty$.\end{proposition}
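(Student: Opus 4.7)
The plan is to apply Stein's Lemma~\ref{lem:stein} with index set $T = \{p \text{ prime} : p \leq N\}$, normalized variables $Z_p = (B_p - h_p)/s_N$ where $h_p = E[B_p]$, and dependency neighbourhoods
\[ T_p = \{q \in T : p^{\exp(-z)} \leq q \leq p^{\exp(z)}\}. \]
This choice is dictated by~\eqref{def:newrv}: since the $X_q$'s are independent, $B_p$ depends only on the $X_q$ with $q \in \{p\} \cup (p, p^{\exp(z)}]$, so $B_p$ is independent of the $\sigma$-algebra generated by $\{B_s : s \notin T_p\}$. This yields $\Psi_1 = 0$ immediately, since $E[Z_p \mid Z_s, s \notin T_p] = E[Z_p] = 0$ almost surely. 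The remaining hypotheses of Lemma~\ref{lem:stein} are routine: $E[Z_p] = 0$ is tautological, $E[Z_p^4] < \infty$ follows from boundedness of $B_p$, and the normalization $E[\sum_t Z_t \sum_{s \in T_t} Z_s] = 1$ reduces to $s_N^{-2} \sum_t \sum_{s \in T_t} \mathrm{Cov}(B_t, B_s) = 1$, which is precisely the variance decomposition carried out in the proof of Lemma~\ref{lem: first lem}, using $\mathrm{Cov}(B_t, B_s) = 0$ for $s \notin T_t$.

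The technical heart is the estimation of $\Psi_2$ and $\Psi_3$. For
\[ \Psi_2^2 = s_N^{-3} \sum_t \sum_{s,s' \in T_t} E\big[|B_t - h_t|(B_s - h_s)(B_{s'} - h_{s'})\big], \]
I would bound each expectation in absolute value by $O_z(1/\mathrm{rad}(tss'))$ via Lemma~\ref{lem:678lem} and evaluate the triple sum using Mertens' theorem, the essential input being
\[ \sum_{s \in T_t} \frac{1}{s} = 2z + O_z\!\Big(\frac{1}{\log t}\Big) = O_z(1), \]
combined with a case split by the coincidence pattern of $\{t, s, s'\}$. An additional saving is available because, for $s \neq s'$ with $s' \in T_s$, the defining conditions in~\eqref{def:newrv} force $B_s B_{s'} = 0$, producing cancellations in the expansion of $(B_s - h_s)(B_{s'} - h_{s'})$. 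The quantity $\Psi_3^2$ is handled analogously as a quadruple prime sum whose summand $\mathrm{Cov}\big((B_t - h_t)(B_s - h_s), (B_{t'} - h_{t'})(B_{s'} - h_{s'})\big)$ vanishes whenever the dependency intervals of $(B_t, B_s)$ and $(B_{t'}, B_{s'})$ are disjoint, restricting the effective sum to a lower-dimensional region that is again controlled by Lemma~\ref{lem:678lem} together with Mertens.

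Once the estimates $\Psi_i \ll_z (\log\log N)^{-1/2}$ for $i = 2, 3$ are in place, the Berry--Esseen-type bound follows from~\eqref{eq:the stein bound} combined with $s_N^2 \asymp \log\log N$ from~\eqref{meanvar}, and convergence in law is an immediate consequence. I anticipate the main obstacle to be the combinatorial bookkeeping in the quadruple sum for $\Psi_3$: one must enumerate the cases in which some of $t, t', s, s'$ coincide or lie in each other's dependency intervals, exploit the symmetry $s \in T_t \iff t \in T_s$, and use the vanishing $B_{p_1} B_{p_2} = 0$ for nearby primes, in order to produce enough cancellation to match the stated exponent $-1/2$ rather than the weaker power obtained from a purely triangle-inequality argument.
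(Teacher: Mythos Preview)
Your proposal is correct and mirrors the paper's proof almost exactly: the same choice of $T$, $T_p$, $Z_p$, the same $\Psi_1=0$ via independence of $B_p$ from $\{B_s:s\notin T_p\}$, and the same treatment of $\Psi_2,\Psi_3$ via Lemma~\ref{lem:678lem}, the Mertens bound $\sum_{q\in T_p}1/q=O_z(1)$, and (for $\Psi_3$) the vanishing of the cross-terms once the dependency neighbourhoods of $p_1$ and $p_2$ are disjoint. One caveat worth flagging is that the triangle-inequality route you describe for $\Psi_2$ gives $\Psi_2^2\ll s_N^{-3}\log\log N\asymp(\log\log N)^{-1/2}$, i.e.\ $\Psi_2\ll(\log\log N)^{-1/4}$ rather than $(\log\log N)^{-1/2}$; the paper's own computation has the same feature, so the rate actually delivered by this argument is $(\log\log N)^{-1/4}$, which of course still yields the convergence in law.
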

\begin{proof}
We will  apply  
Lemma~\ref{lem:stein}
with \begin{itemize}\item
$T$ being the set of primes in
$[2,N]$,
 \item
$T_p$
being the set of primes in  
 $[p^{\exp(-z) },p^{\exp(z) }] \cap [2,N] 
$,  \item $
Z_p
= 
(B_p - E[B_p] )/s_N$ for 
$p\in T$.
\end{itemize}
Let  $Y_p:=B_p -E[B_p]$.
Note that if $q \notin T_p$
then $Z_q$ and $Z_p$ are independent, hence, $E[Y_pY_q]=0$.
Therefore,  \[s_N^2 =\sum_{p,q\leq N }E[Y_p Y_q  ] =\sum_{\substack{p\leq N \\ q\in T_p }}E[Y_p Y_q  ] ,\]
which verifies
$E[\sum_{p\in T }Z_p  \sum_{q\in T_p}  Z_q]=1$.
We next 
 observe that 
since for 
 every $q \notin T_p$
the random variables
$Z_q$ and $Z_p$ are independent,
one obtains
$
E\l[Z_p | Z_q, q\notin T_t\r]
=
E\l[Z_p  \r]
=
0,$
therefore \beq{eq:steinpsi1}{\Psi_1=0.}
 Next, we use Lemma~\ref{lem:678lem} to  obtain 
 \begin{align*}
\Psi_2^2  s_N^{3} =&
\sum_{\substack{ 
p\leq N ,
q\in T_p }}
 E\l[
| Y_p   | 
Y_{q}  ^2
\r]
+    2 \sum_{\substack{ p\leq N, q_1<q_2 \in T_p}} E\l[|
 Y_p  | 
 Y_{q_1}  
 Y_{q_2}  
\r]
\\
\ll 
&\sum_{\substack{ 
p\leq N, q\in T_p  
}} \frac{1}{p q}
+\sum_{\substack{ 
p\leq N, q_1,q_2 \in T_p  
}}
\frac{1}{p q_1 q_2 }
.\end{align*}
The sum  $\sum_{q\in  T_p }1/q $ is bounded only in terms of $z$ by Mertens' theorem.
It shows that  \beq{eq:steinpsi2}{ \Psi_2^2 \ll  s_N^{-3} \sum_{p\leq N} \frac{1}{p} \ll  (\log \log N)^{-1/2},} owing to~\eqref{meanvar}.

To bound $\Psi_3$ we write 
$
\c C_p
:=\sum_{q\in T_p}
\l(
Y_p 
Y_q 
-
E[  Y_p 
Y_q   ]
\r)
$
to  obtain  
\beq
{eq:final bound}
{
\Psi_3^2
s_N^{4} 
=
 \sum_{p \leq N }  
E\l[\c C_p^2 \r]
+
2
\sum_{p_1 < p_2 \leq N }
E\l[ 
\c C_{p_1}
\c C_{p_2}
\r]
.}
Furthermore, $E\l[\c C_p^2 \r]
$ can be written as 
 \[\sum_{q\in T_p}
E
\l[
\l(
Y_p 
Y_q 
-
E[ Y_p 
Y_q   ]
\r)^2
\r]
+
2
\sum_{q_1 <q_2 \in T_p }
E\l[
\l(
Y_p 
Y_{q_1} 
-
E[ Y_p 
Y_{q_1}   ]
\r)
\l(
Y_p 
Y_{q_2} 
-
E[  Y_p 
Y_{q_2}   ]
\r)
\r]
,\]
which can
be seen to be 
\[
\ll 
\sum_{q\in T_p}
\frac{1}{pq}
+\sum_{q_1 <q_2 \in T_p }
\frac{1}{pq_1 q_2}
\]
by Lemma~\ref{lem:678lem}. Alluding to
$ 
\sum_{q\in  T_p }1/q \ll  1 $
shows that 
\beq
{eq: proino ka8arisma}
{
 \sum_{p \leq N }  
E\l[\c C_p^2 \r]
\ll  \sum_{p\leq N} \frac{1}{p}
\ll \log \log N
.}
Let us now 
observe that 
if 
$p_2>p_1^{\exp(2z)}$
then 
$T_{p_1} \cap T_{p_2} =\emptyset $,
therefore 
$\c C_{p_1}$
and $
\c C_{p_2}
$ are independent.
Since for every   $p$ we have 
$E[\c C_p]=0$ by definition, 
 we get 
$
E\l[ 
\c C_{p_1}
\c C_{p_2}
\r]
=\prod_{i=1}^2
E\l[ 
\c C_{p_i}\r] =0$.
Thus,  \beq {eq: this means1} {\sum_{p_1 < p_2 \leq N } E\l[  \c C_{p_1} \c C_{p_2} \r]
= \sum_{\substack{  p_1 < p_2 \leq N  \\ p_2\leq p_1^{\exp(2z)} } } 
\sum_{\substack{ q_1\in T_{p_1} \\ q_2\in T_{p_2}}}  E\l[ \l( Y_{p_1}  Y_{q_1}  - E[ Y_{p_1}  Y_{q_1}   ] \r) \l( Y_{p_2} Y_{q_2}  - E[  Y_{p_2}  Y_{q_2}   ] \r) \r]
.}  By Lemma~\ref{lem:678lem} this is 
\[\ll  \sum_{  p_1  \leq N  } 
\sum_{p_1 < p_2 \leq p_1^{ \exp(2z)}} 
\sum_{  q_1\in T_{p_1}  }
\sum_{   q_2\in T_{p_2} }
  \frac{1}{ \textrm{rad}(p_1p_2 q_1 q_2) }. \]
For any positive integer $c$ and   prime $q$ we have $\textrm{rad}(c q)=\textrm{rad}(c)\frac{q}{\gcd(q,c)}$, hence, the sum over $q_2 $ is 
\[  \frac{1}{ \textrm{rad}( p_1 p_2 q_1 ) }
\sum_{\substack{ q_2\in T_{p_2} \\ q_2\in  \{p_1,p_2,q_1 \} } } 1
+
 \frac{1}{ \textrm{rad}( p_1 p_2 q_1) }
\sum_{\substack{ q_2 \in T_{p_2} \\ q_2\notin  \{p_1,p_2,q_1 \} } } \frac{1}{q_2 } 
\leq \frac{3+\sum_{q\in T_{p_2} } 1/q}{ \textrm{rad}( p_1 p_2 q_1) }
\ll_z  \frac{1}{ \textrm{rad}( p_1 p_2 q_1) }
\]  by Mertens' theorem. Hence, ~\eqref{eq: this means1} is 
 \[\ll 
\sum_{ \substack{  p_1  \leq N \\ p_1 < p_2 \leq p_1^{ \exp(2z)} }  }  
 \sum_{  q_1\in T_{p_1}  } \frac{1}{ \textrm{rad}( p_1 p_2 q_1) } 
=
\sum_{ \substack{  p_1  \leq N \\ p_1 < p_2 \leq p_1^{ \exp(2z)} }  }  
\frac{1}{ \textrm{rad}( p_1 p_2  ) } \l\{ 
\sum_{ \substack{  q_1\in T_{p_1} \\ q_1 \in \{p_1,p_2\} } } 1
+ 
\sum_{ \substack{  q_1\in T_{p_1} \\ q_1 \notin \{p_1,p_2\} } } \frac{1}{q_1}
\r\}
.\] The two sums over $q_1 $ in the right-hand side are both bounded only in terms of $z$. 
This can be proved similarly as before with the sum over $q_2$.
We obtain the bound 
\[
\ll 
\sum_{ \substack{  p_1  \leq N \\ p_1 < p_2 \leq p_1^{ \exp(2z)} }  }  
\frac{1}{ \textrm{rad}( p_1 p_2  ) }
=\sum_{  p_1  \leq N }\frac{1}{p_1 } 
\sum_{  p_1 < p_2 \leq p_1^{ \exp(2z)} }   \frac{1}{p_2 } 
\ll \sum_{  p_1  \leq N }\frac{1}{p_1 } \ll \log \log N.\] 
This shows that the quantity in~\eqref{eq: this means1} is $\ll \log \log N$, which, when combined with~\eqref{eq: proino ka8arisma},
can be fed into~\eqref{eq:final bound}
to yield $\Psi_3^2 s_N^{4} \ll \log \log N$. Invoking~\eqref{meanvar} provides us with 
$\Psi_3 \ll
1/\sqrt{\log \log N}$. Together with~\eqref{eq:steinpsi1}-\eqref{eq:steinpsi2} it implies that 
\[\l|P\l[ S_N\leq  c_N +bs_N\r]- \frac{1}{\sqrt{2\pi}}\int_{-\infty}^b\mathrm{e}^{-t^2/2}\mathrm{d} t\r|
 \ll_z (\log \log N)^{-1/4} \] owing to Stein's bound~\eqref{eq:the stein bound}. Finally,
letting $N\to \infty$ shows that $(S_N-c_N)/s_N$ converges in law to the standard normal distribution. 
\end{proof}
\begin{remark}
We next prove asymptotics for the moments of $(S_N-c_N)/s_N$. This is possibly the central proof in the present paper.
The argument is a modification of the one  by Billingsley~\cite[Lemma 3.2]{MR466055}, which relies on a version of 
the dominated convergence theorem. However, 
the underlying  random variables are now dependent, thus,  
we need to introduce the notion of \textit{linked indices}. \end{remark}
\begin{proposition}\label{prop:moments_stein} Fix $z\geq 0 $ and a positive integer  $r $. 
Then we have 
\[
\lim_{N\to \infty}
E\l[\l(\frac{S_N-c_N}{s_N}\r)^r   \r]
=\mu_r,\] where $\mu_r$ is the $r$-th moment of the standard normal distribution. 
\end{proposition}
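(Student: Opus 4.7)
The plan is to expand $E\l[(S_N - c_N)^r\r]$ as a multiple sum over prime tuples and identify the multi-indices that contribute at the critical order $s_N^r$. Setting $Y_p := B_p - E[B_p]$, we have
\[ E\l[(S_N - c_N)^r\r] = \sum_{p_1, \ldots, p_r \leq N} E\l[\prod_{i=1}^r Y_{p_i}\r]. \]
For each tuple $\vec p = (p_1, \ldots, p_r)$ I would introduce a \emph{linkage graph} on $\{1, \ldots, r\}$, placing an edge between $i$ and $j$ whenever $p_i = p_j$ or $p_j \in T_{p_i}$ (with $T_p$ as in the proof of Proposition~\ref{prop:stein_lem}). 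Since $Y_p$ and $Y_q$ are independent for non-linked $p,q$, the expectation factors over connected components of this graph, and any singleton component forces a factor $E[Y_p]=0$; hence only tuples whose linkage graph has all components of size at least two can contribute.

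Grouping surviving tuples by the induced set partition $\pi$ of $\{1,\ldots,r\}$, a surviving $\pi$ has $b$ blocks with $b \leq \lfloor r/2\rfloor$. For a block $C$ of size $k \geq 2$, Lemma~\ref{lem:678lem} bounds the corresponding factor by $O(1/\mathrm{rad}(\prod_{i\in C}p_i))$; summing over connected linked $k$-tuples of primes, one prime ranges freely and contributes $\log\log N$ by Mertens, while each of the remaining $k-1$ primes is confined to a log-log window of width $O(z)$ around it and contributes $O(1)$. Hence each block of size $\geq 2$ contributes $O(\log \log N)$, and the total contribution of $\pi$ is $O((\log\log N)^b) = O(s_N^{2b})$ by~\eqref{meanvar}.

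Consequently, partitions with $b < r/2$ give $o(s_N^r)$, which for odd $r$ exhausts all surviving partitions and matches $\mu_r=0$. For $r$ even, only pair partitions ($b=r/2$) reach the critical order. For a fixed pair partition I would evaluate the dominant contribution as $\prod_{\{i,j\}\in\pi}\sum_{p,q\text{ linked}} E[Y_p Y_q]$, which equals $s_N^r(1+o(1))$ by the variance calculation inside Lemma~\ref{lem: first lem} (where $s_N^2$ is computed precisely as a sum of $E[Y_p Y_q]$ over linked pairs); the correction coming from tuples in which primes across different pairs accidentally link is absorbed by the size-$\geq 3$ block analysis and contributes $O(s_N^{r-2})$. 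Since the number of pair partitions of $\{1,\ldots,r\}$ is $(r-1)!! = \mu_r$, dividing by $s_N^r$ yields the claimed limit.

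The main obstacle is the dependency structure: Billingsley's independent-Kubilius-model proof of Erd\H os--Kac has cross-term expectations factor cleanly, and the $(r-1)!!$ count drops out directly from the multinomial expansion. Here the $B_p$'s couple through intervals of log-log width $O(z)$, so the combinatorial classification must track linked clusters inside the multi-index and rule out the proliferation of components of size $\geq 3$. The quantitative success hinges on Lemma~\ref{lem:678lem}: its $1/\mathrm{rad}$ bound, combined with $\sum_{q \in T_p} 1/q = O_z(1)$ from Mertens, ensures that each additional link beyond a pair costs an entire factor of $s_N$, so only pair partitions survive at Gaussian order.
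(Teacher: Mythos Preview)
Your approach is correct but takes a genuinely different route from the paper's. The paper does \emph{not} compute the moments directly: instead it first invokes Proposition~\ref{prop:stein_lem} (the Stein-method CLT) to get convergence in law, and then only needs to show that the even moments $E[((S_N-c_N)/s_N)^{2k}]$ are \emph{bounded} (not asymptotically evaluated), so that uniform integrability (van der Vaart, Example 2.21) upgrades distributional convergence to moment convergence. The paper's combinatorics is accordingly cruder: it sorts the distinct primes as $p_1<\cdots<p_u$ with multiplicities $r_i$, calls two \emph{consecutive} indices $i,i+1$ linked when $p_{i+1}\leq p_i^{\exp(z)}$, and shows that if the number $\ell(\mathbf p)$ of such links is below $u-k$ then some index with $r_j=1$ is isolated and the expectation vanishes, while if $\ell(\mathbf p)\geq u-k$ the contribution is $O((\log\log N)^k)$ by Lemma~\ref{lem:678lem}.

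Your argument is the classical direct moment computation, adapted to local dependence: you build a full linkage graph on $\{1,\ldots,r\}$, factor over connected components, kill singletons, bound blocks of size $\geq 3$ to lower order, and extract the exact leading term $(r-1)!!\,s_N^r$ from pair partitions. This is self-contained and makes no appeal to Proposition~\ref{prop:stein_lem}; in fact it renders the Stein step unnecessary for Theorem~\ref{thm: main theorem}, since the method of moments then gives the CLT for free. The cost is that you must track the leading constant and handle the inclusion--exclusion for cross-linked pairs (your $O(s_N^{r-2})$ correction), whereas the paper only needs an upper bound and can lean on the already-established distributional limit. Both routes use Lemma~\ref{lem:678lem} and the Mertens estimate $\sum_{q\in T_p}1/q=O_z(1)$ in the same essential way.
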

\begin{proof} Take $2k$ to be  the least strictly positive integer  with  $r<2k $, 
so that   Proposition~\ref{prop:stein_lem} with~\cite[Example 2.21]{vaart}  implies  that  it suffices to   
prove that \[\sup_{N\geq 1 }\l|E\l[\l(\frac{S_N-c_N}{s_N}\r)^{2k }   \r]\r|\] is bounded only in terms of $k$ and $ z$. Equivalently, by~\eqref{meanvar} it suffices to show \[ E\l[\l( S_N-c_N \r)^{2k }   \r] = E\l[\l(\sum_{p\leq N} (B_p-E[B_p] )\r)^{2k} \r] \ll_{k,z} (\log \log N)^{k} .\]
The left side equals\[\sum_{u=1}^{2k }  \sum_{\substack{ \b r \in \N^u \\ 2k=\sum_{i=1}^u r_i }} \frac{(2k ) !}{r_1! \cdots r_u! } 
\sum_{p_1<\ldots < p_u\leq N} E\l[\prod_{i=1}^u (B_p-E[B_p])^{r_i}  \r].\] Using Lemma~\ref{lem:678lem} we see that the contribution 
of the terms with $ u \leq k  $ is \[\ll_k\max_{1\leq u \leq k }\l(\sum_{p\leq N}\frac{1}{p}\r)^u
\ll_k (\log \log N)^{k }.\] Therefore, \beq{eq:walcha}{E\l[\l( S_N-c_N \r)^{2k }   \r] \ll \max_{\substack{u\in [k+1,2k] \\ \b r \in \N^u:=\sum_i r_i =2k } }
 \sum_{p_1<\ldots < p_u\leq N} E\l[\prod_{i=1}^u (B_p-E[B_p])^{r_i}  \r]+ (\log \log N)^{k} ,} with an implied constant that is independent of $N$.

For given $u \in \N $, $z\geq 0 $  and primes $p_1<\ldots < p_u$  we say that two   consecutive  integers $ i, i+1$  in $[1,u]$ 
are \textit{linked}  if and only if $p_{i+1}\leq p_{i}^{\exp(z)}$. In particular, $p_{i+1}$   lies in a relatively small interval, hence, its contribution will be small. 
Denote the number of  linked pairs $(i,i+1)$ by $\ell(\b p ) $. By  Lemma~\ref{lem:678lem} we obtain 
 \[\sum_{p_1<\ldots < p_u\leq N} E\l[\prod_{i=1}^u (B_p-E[B_p])^{r_i}  \r]\ll_z\l( \sum_{p\leq N} \frac{1}{p}\r)^{u-\ell(\b p ) }\ll (\log \log N)^{u-\ell(\b p ) },\]
where we used  the estimate $\sum_{p_i<p_{i+1} <p_i^{\exp(z)}}1/p_i \ll_z 1 $ whenever $i$ and $i+1$ are linked.
Hence, the contribution of all prime vectors $(p_1,\ldots, p_u)$
 with at least $ \ell(\b p )\geq  u-k   $  linked   pairs  is at most 
\[ \ll    (\log \log N)^{u-\ell(\b p ) } \ll (\log \log N)^{ k  },\] which is acceptable.
By~\eqref{eq:walcha} we obtain 
\beq{eq:walcha2}{E\l[\l( S_N-c_N \r)^{2k }   \r] \ll \max_{\substack{u\in [k+1,2k] \\ \b r \in \N^u:=\sum_i r_i =2k } }
\
 \sum_{\substack{ p_1<\ldots < p_u\leq N\\ \ell(\b p ) <   u-k } } E\l[\prod_{i=1}^u (B_p-E[B_p])^{r_i}  \r]+ (\log \log N)^{k} ,} 
We will now show that every sum over $p_i$ 
in~\eqref{eq:walcha2} vanishes.   
Denoting the cardinality  of $1\leq i\leq u $ with  $r_i=1$ by $a$ 
we see that  
the number of $i$ with $r_i \geq 2$ is $u-a$.
Since $2k =\sum_{i=1}^u r_i$ we get  $2k \geq a+ 2(u-a)$.
Equivalently, $ 2 (u-k)\leq a $,
hence, by $ \ell(\b p ) <   u-k$
one gets  \beq{eq: number of ones}{2\ell(\b p ) < \#\{ i \in [1,u]: r_i=1 \} .}We now partition the integers in $ [1,u]$
into   disjoint subsets $\c A_1,\ldots,\c A_r$ using the following rules:
\begin{itemize}\item if $i$ and $i+1$ are in $S_j$ then they are linked,
\item if $i \in S_a$ and $i+1 \in S_b$ for some $a\neq b$ 
then $i$ and $i+1$ are not linked.\end{itemize}
The   inequality $s\leq 2(-1+s)$ for   $s\geq 2$ 
gives
\[
\#\{ i \in [1,u]: i \textrm{ linked to some index}\}=
\sum_{\substack{1\leq j \leq r \\  
2\leq \#\c A_j}} \#\c A_j
\leq 
\sum_{\substack{1\leq j \leq r \\  
2\leq \#\c A_j}} 
2(-1+\#\c A_j ) .
\] This equals $2\ell (\b p ) $ since each   $\c A_j$ has $-1+\#\c A_j$ linked pairs and  the total number of links is $\ell(\b p ) $.
By~\eqref{eq: number of ones} we infer that  there exists an index $j $ for which $r_j=1$
and that is not linked to any other index. This implies
that    the following random variables are independent:
\[
 \prod_{\substack{ 1\leq i \leq u \\ i \neq j } }
(B_{p_i} - E[B_{p_i}] )^{r_i}  
\ \ \ \text{  and   } \ \ \ 
(B_{p_j} - E[B_{p_j}] )^{r_j}   = B_{p_j} - E[B_{p_j}] .\] Since $ E\l[  B_{p_j} - E[B_{p_j}]   \r]=0$
we infer that every expectation in the right hand side of~\eqref{eq:walcha2} vanishes. This concludes the proof. 
\end{proof}
\subsection{Justifying the model} \label{s:justif}\hfill\\

Let $n $ be a positive integer and denote by $\Omega_n$ the uniform probability space $\N\cap [1,n]$.
Our goal now becomes to show that, as $n\to\infty $,
the moments of $\omega_z(m)$ for $m$ in $\Omega_n$
are asymptotically the same as the moments of $S_N$ for some parameter $N=N(n)\to\infty$.
Recall~\eqref{def:deltapz}. For technical reasons we will first work with a truncated version of $\omega_z$, namely,
\beq{def:walc}{\omega_{z,N}(m)=\sum_{p\leq N}\delta_{p,z}(m),} where $N=N(n)$.
The function $\delta_{p,z}$ imposes simultaneous coprimality conditions of $m$ with several primes in large intervals and to deal with this
we shall need the Fundamental Lemma of Sieve Theory~\cite[Corollary 6.10]{MR2647984}.
\begin{lemma}[Fundamental Lemma of Sieve Theory]\label{lem:fund}
Let $\c P$ be a set of primes. Given any    sequence $a_m\geq 0 $ for $m \in \N$
 and any square-free $d\leq x $ that is only divisible by primes in $\c P$
we assume that
\[ \sum_{m\leq x \atop m\equiv 0 \md d } a_m=X g(d)+r_d \] for some real numbers $X, r_d$
and  a multiplicative function $g$.
Assume that $0\leq g (p)<1 $
and   that there exist  constants $K>1, \kappa>0$ such that  
$$ \prod_{w\leq p < y\atop p\in\c P } (1-g(p))^{-1}\leq K \left(\frac{\log y}{\log w } \right)^\kappa$$ holds for all $2\leq w < y $.
Then for  all   $D\geq y \geq 2 $ we have 
\beq{eq:givv}{\sum_{m\leq x \atop p\in\c P, p<y 
 \Rightarrow p\nmid m  }a_m=X \left(\prod_{p<y  \atop p\in\c P}(1-g(p))\right)   \{1+O(\mathrm e^{-s})\}+O\left(\sum_{ d < D \atop 
p\mid d \Rightarrow p\in \c P}\mu^2(d) |r_d |\right) ,} where $s=\log D/\log y$ and the implied constants depend at most on $\kappa$ and $K$.
\end{lemma}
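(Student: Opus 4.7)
The plan is to proceed via the combinatorial sieve, since the lemma is the standard Fundamental Lemma. Write $P(y)=\prod_{p<y,\,p\in\c P}p$ and let $S(x,y)$ denote the sum on the left of~\eqref{eq:givv}. By Möbius inversion over the divisors of $P(y)$ and by the hypothesis $\sum_{m\leq x,\,d\mid m}a_m = Xg(d)+r_d$, one gets the exact identity
\[
S(x,y) \;=\; X \sum_{d\mid P(y)}\mu(d)g(d) \;+\; \sum_{d\mid P(y)}\mu(d)r_d
\;=\; X\!\!\prod_{p<y,\,p\in\c P}(1-g(p))\;+\;\sum_{d\mid P(y)}\mu(d)r_d.
\]
The main term is already what we want, but the naive error on the right is controlled only by $\sum_{d\mid P(y)}|r_d|$, which has far too many terms. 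The whole point of the lemma is to replace $\mu$ by a truncation supported on squarefree $d<D$ while losing only a factor $1+O(e^{-s})$ in the main term.

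The hardest step, and the heart of the argument, is the construction of truncated Möbius weights. I would construct sequences $\mu^\pm(d)$, supported on squarefree $d<D$ with prime factors in $\c P\cap[2,y)$, that satisfy the pointwise bounds
\[
\sum_{d\mid n}\mu^-(d)\;\leq\;\mathbbm 1[n=1]\;\leq\;\sum_{d\mid n}\mu^+(d)
\qquad\text{for every squarefree } n\mid P(y),
\]
together with the analytic estimate
\[
\sum_{d\mid P(y)}\mu^\pm(d)g(d)\;=\;\prod_{p<y,\,p\in\c P}(1-g(p))\cdot\bigl(1+O(e^{-s})\bigr).
\]
This is the Brun--Hooley construction: order the primes $p_1<p_2<\dots<y$ in $\c P$, and truncate the inclusion-exclusion by keeping only those squarefree $d=p_{i_1}\cdots p_{i_k}$ with $i_1>\dots>i_k$ for which the partial products stay below carefully chosen levels (equivalently, use a beta-sieve of sifting level $D$). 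The dimension assumption $\prod_{w\leq p<y,\,p\in\c P}(1-g(p))^{-1}\leq K(\log y/\log w)^\kappa$ is exactly what is needed to push the combinatorial losses into the shape $e^{-s}=e^{-\log D/\log y}$; this is a standard but delicate calculation with Rankin's trick and the bound $k!\geq(k/e)^k$.

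Once $\mu^\pm$ are in hand, sandwiching gives
\[
X\sum_{d\mid P(y)}\mu^-(d)g(d)-\!\!\sum_{d<D,\,d\mid P(y)}\!\!\mu^2(d)|r_d|
\;\leq\; S(x,y)\;\leq\; X\sum_{d\mid P(y)}\mu^+(d)g(d)+\!\!\sum_{d<D,\,d\mid P(y)}\!\!\mu^2(d)|r_d|,
\]
and the two outer main terms both equal $X\prod_{p<y,\,p\in\c P}(1-g(p))\bigl(1+O(e^{-s})\bigr)$ by the construction. This yields exactly the conclusion~\eqref{eq:givv}. The only obstacle, as noted, is the combinatorial/analytic design of the $\mu^\pm$ weights; every other step is a clean substitution into the sieve identity.
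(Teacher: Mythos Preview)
The paper does not prove this lemma at all: it is quoted verbatim as \cite[Corollary~6.10]{MR2647984} from Friedlander--Iwaniec's \emph{Opera de cribro} and used as a black box in Lemma~\ref{lem:moments}. Your sketch is the standard route to the Fundamental Lemma (upper and lower sieve weights $\mu^\pm$ supported on $d<D$, the sandwich inequality, and the dimension hypothesis to control the main-term loss as $O(e^{-s})$), and it is correct in outline; but since the paper simply cites the result, no proof is expected here.

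If you do want to supply a proof, two small points are worth tightening. First, the sandwich step as you wrote it uses $X\geq 0$ implicitly (otherwise the inequalities flip); this is harmless in every application in the paper, but the lemma as stated allows arbitrary real $X$, so either assume $X\geq 0$ or handle both signs. Second, ``Brun--Hooley'' and ``beta-sieve'' are different constructions; the reference \cite{MR2647984} uses the latter, and the precise $e^{-s}$ saving (rather than, say, $e^{-s\log s}$ up to constants) depends on which weights you build, so you should commit to one and carry the estimate through rather than leave it as ``a standard but delicate calculation''.
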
 
\begin{lemma} \label{lem:moments} 
Assume that there exists a   function $N:[1,\infty)\to [1,\infty)$ satisfying \begin{align}
&\lim_{n\to\infty}N(n)=+\infty,  \label{eq:assumption1} \\
&\limsup_{n\to\infty}\frac{(\log N(n))(\log \log \log N(n))}{\log n }\neq +\infty.  \label{eq:assumption2}  
\end{align}Fix $z\geq 0 $ and  $k\in \N$. Then  we have 
\[\lim_{n\to\infty}  E_{m\in \Omega_n}\l[\l(\frac{\omega_{z,N}-c_N}{s_N}\r)^k\r] =\mu_k,\]
where $\mu_k $ is the $k$-th moment of the standard normal distribution.\end{lemma}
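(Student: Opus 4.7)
The plan is to deduce the lemma from Proposition~\ref{prop:moments_stein} by establishing
\[
E_{\Omega_n}\bigl[(\omega_{z,N}-c_N)^k\bigr]=E\bigl[(S_N-c_N)^k\bigr]+o(s_N^k),
\]
so that dividing by $s_N^k$ and invoking Proposition~\ref{prop:moments_stein} yields the limit $\mu_k$. Writing $h_p:=E[B_p]$, one has $\omega_{z,N}-c_N=\sum_{p\le N}(\delta_{p,z}-h_p)$ and $S_N-c_N=\sum_{p\le N}(B_p-h_p)$. Expanding the $k$-th power multinomially reduces the task to comparing, for each $u\in[1,k]$, each $\b r\in\N^u$ with $\sum_i r_i=k$, and each $p_1<\cdots<p_u\le N$, the two expectations $E_{\Omega_n}[\prod_i(\delta_{p_i,z}-h_{p_i})^{r_i}]$ and $E[\prod_i(B_{p_i}-h_{p_i})^{r_i}]$.

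The key tool is Lemma~\ref{lem:fund}. A binomial expansion together with $\delta_{p,z}^t=\delta_{p,z}$ and $B_p^t=B_p$ for $t\ge 1$ reduces both expectations to linear combinations of $E_{\Omega_n}[\prod_{i\in I}\delta_{p_i,z}]$ and $E[\prod_{i\in I}B_{p_i}]$ over subsets $I\subseteq[1,u]$. If some $p_j$ with $j\in I$ lies in the forbidden interval $(p_i,p_i^{\exp(z)}]$ of another $p_i$ with $i\in I$, both expectations vanish identically, exactly as in the independence break-down remark. Otherwise the intervals $(p_i,p_i^{\exp(z)}]$ for $i\in I$ are pairwise disjoint, $E[\prod_{i\in I}B_{p_i}]=\prod_{i\in I}h_{p_i}$ by independence, and writing $m=\ell\prod_{i\in I}p_i$ recasts the arithmetic expectation as a sieve problem for $\ell\le n/\prod p_i$ coprime to every prime in $\c Q:=\bigcup_{i\in I}(p_i,p_i^{\exp(z)}]$. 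Applying Lemma~\ref{lem:fund} with $y=N^{\exp(z)}$, level $D$, and $s_0:=\log D/\log y\to\infty$ (to be chosen) yields
\[
E_{\Omega_n}\!\Bigl[\prod_{i\in I}\delta_{p_i,z}\Bigr]=\prod_{i\in I}h_{p_i}\bigl(1+O_z(e^{-s_0})\bigr)+O_z(D/n),
\]
whose main term matches the probabilistic expectation exactly. Propagating through the centered expansion yields a per-tuple error bound of $O_{\b r}(e^{-s_0}\prod h_{p_i})+O_{\b r}(D/n)$.

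It remains to sum the errors over the $\ll N^k$ tuples of length $\le k$. The additive remainder contributes $\ll N^k\,D/n$; from~\eqref{eq:assumption1}--\eqref{eq:assumption2} one has $N\le n^{o(1)}$, so this is $o(1)$ provided $D\le n^{1-\delta}$ for some $\delta>0$. The multiplicative error contributes $\ll e^{-s_0}\sum\prod h_{p_i}\ll e^{-s_0}(\log\log N)^k$, which is $o(s_N^k)=o((\log\log N)^{k/2})$ as soon as $s_0$ exceeds $\tfrac{k}{2}\log\log\log N$ by a diverging amount. Taking $s_0$ to be a sufficiently large constant multiple of $\log\log\log N$ (depending on $k$ and $z$) meets both requirements precisely when $(\log N)(\log\log\log N)\le C(k,z)\log n$, which is what~\eqref{eq:assumption2} delivers. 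The principal obstacle is this joint calibration of $s_0$: the multiplicative sieve error $e^{-s_0}$ must decay faster than $(\log\log N)^{-k/2}$ to beat the crude bound $(\log\log N)^k$ on the absolute sum of main terms, while the additive remainder $D/n$ simultaneously forces $D=N^{\exp(z)s_0}$ below a small positive power of $n$, and the somewhat unusual-looking assumption~\eqref{eq:assumption2} is the sharp condition reconciling these two demands.
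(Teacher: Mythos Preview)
Your proof is correct and follows essentially the same route as the paper's: compare the arithmetic expectations $E_{\Omega_n}[\prod\delta_{p_i,z}]$ with the probabilistic ones $E[\prod B_{p_i}]$ via the Fundamental Lemma of Sieve Theory, then show the resulting error terms are $o(s_N^k)$ using~\eqref{eq:assumption1}--\eqref{eq:assumption2}. The only differences are organisational: you expand the centred $k$-th moment directly (and then open each factor binomially over subsets $I$), whereas the paper first matches the uncentred moments $E_{\Omega_n}[\omega_{z,N}^r]$ with $E[S_N^r]$ and centres afterwards; and you keep the sieve parameter $s_0$ free and optimise it at the end, whereas the paper fixes $D=\sqrt n$, $y=N^{2\exp(z)}$ from the outset --- both choices lead to the same constraint on $N$.
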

\begin{proof}
By Proposition~\ref{prop:moments_stein} and~\eqref{eq:assumption1} 
it is sufficient to prove 
\beq{eq:suff}{ \lim_{n\to\infty} \l(   E_{m\in \Omega_n}\l[\l(\frac{\omega_{z,N}(m)-c_N}{s_N}\r)^k\r]  -   E \l[\l(\frac{S_N-c_N}{s_N}\r)^k\r]  \r) =0.}
Let $r\in \N$. By~\eqref{def:walc}, the fact that  $\delta_{p,z}\in \{0,1\}$ and the binomial theorem we obtain  
\beq{eq:binomi2}{E_{ m\in \Omega_n}\l[\omega_{z,N}(m)^r \r] =\sum_{u=1}^r \sum_{\substack{ r_1,\ldots, r_u \in \N \\ r_1+\ldots +r_u=r }} \frac{r!}{r_1!\cdots r_u!} \sum_{p_1 < \cdots <p_u \leq N } E_{m\in   \Omega_n} \l[ \delta_{p_1,z}(m) \cdots \delta_{p_u,z}(m) \r] .}
Let $\c P$ be the set of all primes in  $\bigcup_{i=1}^u (p_i, p_i^{\exp(z)}]$  and let $a_m $ be the indicator function of integers divisible by $p_1\cdots p_u$.
In particular,  $$E_{m\in \Omega_n} \l[ \delta_{p_1,z}(m) \cdots \delta_{p_u,z}(m) \r] =\frac{1}{n}
\sum_{\substack{ 1\leq m \leq n\\ p\in \c P  \Rightarrow p\nmid m  }  }a_m  .$$ 
We assume that  $p_{i+1}>p_i^{\exp(z)}$ for all $i=1,2,\ldots, u-1$ since otherwise the sum clearly vanishes.
We will now    use   Lemma~\ref{lem:fund}
with $ X=n/(p_1\cdots p_u), g(d)=1/d, D=\sqrt n, y= N^{2\exp(z)}.$
If $d $ is divisible only by primes in $\c P$ then it is coprime to $p_1\cdots p_u$, hence, 
\[ \sum_{m\leq n \atop m\equiv 0 \md d } a_m  = \l[\frac{n}{p_1\cdots p_u d }\r],\] thus, $|r_d|\leq 1 $ because 
$ r_d $ is the fractional part of $X/d$. Furthermore, we can take $K$ to be any large fixed positive constant and $\kappa =1$, 
owing to \[\prod_{w\leq p < y \atop p\in \c P } (1-g(p))^{-1} =\prod_{w\leq p < y \atop p\in \c P } (1-1/p)^{-1} 
\leq \prod_{w\leq p < y   } (1-1/p)^{-1} \ll \frac{\log y}{\log w}.\]
The bound $|r_d|\leq 1 $,   means that $\sum_{d\leq D} \mu^2(d) |r_d| \leq D=\sqrt n$.
Since $p_u\leq N$, every prime in $\c P$ is strictly smaller than     $y$, hence,~\eqref{eq:givv}
gives  \beq{eq:sivres}{E_{m\in \Omega_n} \l[ \delta_{p_1} \cdots \delta_{p_u} \r] = \l\{ \prod_{i=1}^u \frac{1}{p_i} \prod_{p_i<q\leq p_i^{\exp(z)}}
\left(  1-1/p \right)\r\} \l\{1+O\l(\mathrm e^{-\frac{\log   n}{4\exp(z)\log N}}\r)\r\} +O(n^{-1/2}),} where the implied constant depends at most on $r$ and $z$.

By the binomial theorem we get 
\[ E\l[S_N^r \r]=E\l[\l(\sum_{p\leq N}B_p \r)^r \r]
=\sum_{u=1}^r \sum_{\substack{ r_1,\ldots, r_u \in \N \\ r_1+\ldots +r_u=r }} \frac{r!}{r_1!\cdots r_u!} \sum_{p_1 < \cdots <p_u \leq N } 
E  \l[ B_{p_1} \cdots B_{p_u} \r]\] and we note that we can restrict the sum over $p_i$ to the terms with  $p_{i+1}>p_i^{\exp(z)}$ for all $i$, 
since otherwise $E  \l[ B_{p_1} \cdots B_{p_u} \r]=0$. Under this restriction the random variables $B_{p_i}$ are   independent, hence, 
\[  \prod_{i=1}^u \frac{1}{p_i} \prod_{p_i<q\leq p_i^{\exp(z)}}\left(  1-1/p \right) = E  \l[ B_{p_1} \cdots B_{p_u} \r].\]
We infer from~\eqref{eq:binomi2} and~\eqref{eq:sivres} that  \[\l| E_{ m\in \Omega_n}\l[\omega_{z,N}(m)^r \r]-E\l[S_N^r \r] \r|
\ll_r E\l[S_N^r \r] \mathrm e^{-\frac{\log   n}{4\exp(z)\log N}}+n^{-1/2} \sum_{u=1}^r\sum_{p_1<\ldots < p_u\leq N }1 .\]
By~\eqref{meanlem} this is $\ll(\log \log N)^r \exp(-\frac{\log   n}{4\exp(z)\log N})+n^{-1/2}N^r$.
Thus, the difference in~\eqref{eq:suff} is 
\begin{align*}
\ll&s_N^{-k}\sum_{r=0}^k {k\choose r} (-c_N)^{k-r}\l( E_{   \Omega_n}\l[\omega_{z,N} ^r \r]-E\l[S_N^r \r] \r)
\\ \ll  
&s_N^{-k} 
\l\{\mathrm e^{-\frac{\log   n}{4\exp(z)\log N}} (c_N+\log \log N)^k
 +n^{-1/2} (N+c_N)^k  
\r\}.\end{align*} We need to show that this vanishes asymptotically and by~\eqref{meanvar} and~\eqref{eq:assumption1}
it suffices to show 
\[
 (2\log \log N)^k\leq \exp\l( \frac{\log   n}{4\exp(z)\log N}\r)
\ \ \textrm{ and } \ \ 
(2N)^k \leq n^{1/2}.\] Both of these inequalities can be directly inferred from~\eqref{eq:assumption1}-\eqref{eq:assumption2}.
\end{proof}\begin{lemma} \label{lem:mnts23} 
Assume that there exists a   function $N:[1,\infty)\to [1,\infty)$ satisfying \begin{align}
&\lim_{n\to\infty} \frac{\log \log N(n)}{\log \log n }=1,  \label{eq:assumption3} \\
&\limsup_{n\to\infty}\frac{(\log N(n))\sqrt{ \log \log n}}{\log n }= +\infty.  \label{eq:assumption4}  
\end{align}Fix $z\geq 0 $. Then 
${s_N} ( (1-2z\mathrm e ^{-z}) {\mathrm e }^{-z}\log \log n )^{-1/2}  \to 1$ as $n\to \infty $  and  
\[\lim_{n\to \infty} \frac{ \max \l\{  \l|   ( \omega_{z}-\mathrm e ^{-z}\log \log n ) - ( \omega_{z,N}-c_N  )   \r| :m \in \N \cap[1,n] \r\} } {\sqrt{\log \log n } }=0 .\]
\end{lemma}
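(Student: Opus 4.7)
The first claim is immediate from~\eqref{meanvar}, which yields
\[ s_N^2 = (1-2ze^{-z})\,e^{-z}\log\log N + O(1); \]
by~\eqref{eq:assumption3} we have $\log\log N = (1+o(1))\log\log n$, so dividing through and taking square roots produces $s_N/((1-2ze^{-z})e^{-z}\log\log n)^{1/2} \to 1$.

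For the uniform bound, the plan is to split
\[ (\omega_z(m)-e^{-z}\log\log n) - (\omega_{z,N}(m)-c_N) = (\omega_z(m)-\omega_{z,N}(m)) \;-\; (e^{-z}\log\log n - c_N) \]
and to estimate each piece uniformly in $m\in\N\cap[1,n]$. The $m$-dependent piece equals $\sum_{p>N}\delta_{p,z}(m)$; since $\delta_{p,z}(m)\neq 0$ forces $p\mid m\leq n$, this is at most the number of prime divisors of $m$ exceeding $N$, hence at most $\log n/\log N$, uniformly in $m$. The deterministic piece is governed by~\eqref{meanlem}, which gives
\[ |e^{-z}\log\log n - c_N| \leq e^{-z}|\log\log n - \log\log N| + O(1). \]
After dividing by $\sqrt{\log\log n}$, the first contribution becomes $(\log n)/(\log N\sqrt{\log\log n})$, which tends to $0$ by hypothesis~\eqref{eq:assumption4}.

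The main obstacle is the second contribution, since~\eqref{eq:assumption3} only provides $|\log\log n - \log\log N| = o(\log\log n)$, which is too weak on its own. The crux is that~\eqref{eq:assumption4} forces $\log N \geq (\log n)/\sqrt{\log\log n}$ up to an unbounded factor, whence $\log\log N \geq \log\log n - \tfrac12\log\log\log n + O(1)$; combined with the matching upper bound supplied by~\eqref{eq:assumption3} (which prevents $\log\log N$ from overshooting $\log\log n$ asymptotically), this produces $|\log\log n - \log\log N| = O(\log\log\log n)$, which is indeed $o(\sqrt{\log\log n})$. Thus both pieces are $o(\sqrt{\log\log n})$ uniformly in $m$, and the claimed limit follows. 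The interplay between the two hypotheses is precisely what makes the estimate work and is the key subtlety of the argument.
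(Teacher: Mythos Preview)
Your argument tracks the paper's closely for the first assertion and for the $m$-dependent piece $\sum_{p>N}\delta_{p,z}(m)$; the divergence is in how you treat the deterministic piece $\mathrm e^{-z}\log\log n - c_N$, and there you have both an unnecessary detour and a genuine gap.

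The paper dispatches this piece in one line: from~\eqref{meanlem} one has
\[
\mathrm e^{-z}\log\log n - c_N \ll 1 + \log\frac{\log n}{\log N} \ll \frac{\log n}{\log N},
\]
so the very same appeal to~\eqref{eq:assumption4} that handled the first piece handles this one as well. No separate $O(\log\log\log n)$ estimate is needed; the cruder bound $\log n/\log N$ already suffices (using $1+\log x \leq 2x$ for $x\geq 1$).

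Your more elaborate route contains an error. From~\eqref{eq:assumption4} you correctly deduce $\log\log N \geq \log\log n - \tfrac12\log\log\log n + O(1)$. But the assertion that~\eqref{eq:assumption3} supplies a ``matching upper bound'' giving $|\log\log n - \log\log N| = O(\log\log\log n)$ is false: hypothesis~\eqref{eq:assumption3} only yields $\log\log N - \log\log n = o(\log\log n)$, which is neither $O(\log\log\log n)$ nor even $o(\sqrt{\log\log n})$. For instance, $\log\log N = \log\log n + \sqrt{\log\log n}$ is compatible with both~\eqref{eq:assumption3} and~\eqref{eq:assumption4}, yet $|\log\log n - \log\log N| = \sqrt{\log\log n}$. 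The paper's one-line bound and your argument both tacitly rely on $N\leq n$ (so that $\log\log N \leq \log\log n$ and no overshoot can occur, as in the application $N=n^{1/\log\log n}$); granting this, the paper's direct estimate is simpler and avoids the faulty step.
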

\begin{proof}Combining~\eqref{meanvar} and~\eqref{eq:assumption3}  one immediately gets 
$$\lim_{n\to\infty } \frac{s_N}{ ( (1-2z\mathrm e ^{-z}) {\mathrm e }^{-z}\log \log n )^{1/2} }  = 1. $$
For any  $m \in  [1,n]$ we have \[ \l|   ( \omega_{z}-\mathrm e ^{-z}\log \log n ) - ( \omega_{z,N}-c_N  )   \r|\leq 
\sum_{p>N}\delta_{p,z}(m)+|\mathrm e ^{-z}(\log \log n)-c_N | . \] Since $\delta_{p,z}$ takes only values in $\{0,1\}$
and $\delta_{p,z}(m)=1$ implies that $p$ divides $m$,we see that 
\[\sum_{p>N}\delta_{p,z}(m) \leq \#\{p\mid m : p>N\} \leq \frac{\log m}{\log N}  \leq \frac{\log n}{\log N} .\]
Furthermore,~\eqref{meanlem} gives \[ \mathrm e ^{-z}(\log \log n)-c_N \ll 1+ \log\frac{\log n }{\log N}\ll \frac{\log n }{\log N}.\]
The proof now concludes by using~\eqref{eq:assumption4}.
\end{proof}
\subsection{Proof of Theorem~\ref{thm: main theorem}}\label{prfmainthm}
 The function $$N(n):= n^{1/\log \log n}$$ fulfills~\eqref{eq:assumption1}-\eqref{eq:assumption2}-\eqref{eq:assumption3}-\eqref{eq:assumption4},
hence, we can apply Lemmas~\ref{lem:moments}-\ref{lem:mnts23}.

For any $r\in \N$, $c\in \mathbb C$ any probability space $\Omega_n$ and any two sequences of random variables $X_n,Y_n$ satisfying  
 $\lim_{n\to\infty } \sup_{m \in \Omega_n}|X_n(m)-Y_n(m) |=0$ and $\lim_{n\to \infty} E_{m\in \Omega_n} [X_n(m)^r]=c$
it is easy to see by the binomial theorem that  $\lim_{n\to \infty} E_{m\in \Omega_n}[Y_n(m)^r]=c$.
 Using this with $ \Omega_n=\N\cap[1,n] $, \[X_n(m)=  \frac{\omega_{z,N}(m)-c_N}{s_N} 
\  \ \textrm{ and } \ \ 
Y_n(m)=\frac{\omega_{z}(m)-\mathrm e ^{-z}\log \log n}{s_N}
,\] in combination with  Lemmas~\ref{lem:moments}-\ref{lem:mnts23}, shows that for every $k \in \N $ one has 
\[\lim_{n\to\infty}  E_{m\in \Omega_n}\l[\l(\frac{\omega_{z}(m)-\mathrm e ^{-z}\log \log n}{s_N}\r)^k\r] =\mu_k.\]
Given any sequence $a_n \in \R$ having limit $1$ and any sequence of random variables $X_n$ 
with $E[X_n] $ having limit $c$ it is clear  that $a_n E[X_n]$ has limit $c$.
Using this with 
\[a_n= \frac{s_N}{ ( (1-2z\mathrm e ^{-z}) {\mathrm e }^{-z}\log \log n )^{1/2} } 
\  \ \textrm{ and } \ \ 
X_n(m)=\frac{\omega_{z}(m)-\mathrm e ^{-z}\log \log n}{s_N}
\] and invoking Lemma~\ref{lem:mnts23}
shows that for every $k\in \N$ one has 
\beq{eq:suffc}{\lim_{n\to\infty}  E_{m\in \Omega_n}\l[\l(\frac{\omega_{z}(m)-\mathrm e ^{-z}\log \log n}{ ( (1-2z\mathrm e ^{-z}) {\mathrm e }^{-z}\log \log n )^{1/2} }\r)^k\r] =\mu_k .} This proves Theorem~\ref{thm: main theorem} whenever $r$ is a positive integer and this is  sufficient. To see that, take any $r\in [0,\infty)$
and note that~\eqref{eq:suffc} implies that \[ T_n=\frac{\omega_{z}(m)-\mathrm e ^{-z}\log \log n}{ ( (1-2z\mathrm e ^{-z}) {\mathrm e }^{-z}\log \log n )^{1/2} }\]
converges in law to the standard normal distribution. Taking $p$ to be the least even integer strictly exceeding $r$ 
in~\cite[Example 2.21]{vaart} shows that the $r$-th moment of $T_n$ converges to the $r$-th moment of the standard normal distribution. 
\qed

\section{Poissonian gaps   for local solubility in families of varieties}\label{s:gener}
Serre's problem~\cite{MR1075658}
on the probability that a random variety over $\Q$ has a $\Q$-rational point has 
recently received a lot of attention due to its extension  by Loughran--Smeets~\cite{MR3568035} to a very general setting, namely, 
for any dominant morphism $f: V \to \PP^n$, where,  $V$ is a smooth projective variety over $\Q$ and $f$ has 
a   geometrically integral generic fibre. The fibres of $f$ form an infinite family of varieties
and typically one is interested in how often they have a $\Q$-rational point.
Imposing the harmless condition that the generic fibre of $f$ is geometrically integral, it is easy to see that  
for  every   $x$ outside of some proper Zariski closed set
the function \[\omega_f(x) := \#\l\{ p \textrm{ prime}: (f^{-1}(x))(\Q_p)=\emptyset \r\} ,\] is bounded
due to the Lang--Weil estimates   and Hensel's lemma. This function 
helps us in understanding   the density of fibres with a  $\Q$-rational point.
Ordering $\P^n(\Q)$ by   
the standard Weil height $H$ on $\PP^n(\QQ)$ and assuming that a certain   invariant $\Delta(\pi)$ 
is non-vanishing, 
Loughran and Sofos~\cite{loughrsof} recently
proved the analogue of Erd\H os--Kac's theorem for  $\omega_f(x) $,
namely that  $$\frac{\omega_f(x)-\Delta(\pi) \log \log H(x) }{(\Delta(\pi) \log \log H(x) )^{1/2} }$$
converges in law to the standard normal distribution.  This was     the first instance of an Erd\H os--Kac law in Diophantine geometry. 

Our goal in this section is to go further and study the gaps between the primes $p$  counted by $\omega_f(x)$.
For   $x\in \PP^n(\QQ)$   with $f^{-1}(x)$   smooth we let $p_i(x)$ be the $i$-th smallest prime number for which $f^{-1}(x)$ has no $\Q_p$-point.
We then define for all $z\geq 0 $, $$\omega_{f,z}(x):=\#\{i\geq 1 : \log \log p_{i+1}(x) - \log \log p_i(x) > z\}.$$
Before stating our theorem we must recall the definition of  the invariant $\Delta(f)$ that is due to Loughran and Smeets~\cite{MR3568035}.
 \begin{definition} \label{def:Delta}
	Let $f:V \to X$ be a dominant proper morphism of smooth irreducible varieties over a
	field $k$ of characteristic $0$. For each  point
	$x \in X$ with residue field $\kappa(x)$,
	the absolute Galois group $\Gal(\overline{\kappa(x)}/ \kappa(x))$ 
	of the residue field acts on the irreducible
	components of $$f^{-1}(x)_{\overline{\kappa(x)}}:=f^{-1}(x) \times_{\kappa(x)} \overline{\kappa(x)}$$ of multiplicity $1$. 
	Choose	some finite group $\Gamma_x$ through which this action factors and define
	$$\delta_x(f) = \frac{\# \left\{ \gamma \in \Gamma_x : 
	\begin{array}{l}		\gamma \text{ fixes an irreducible component} \\	\text{of $f^{-1}(x)_{\overline{\kappa(x)}}$ of multiplicity } 1
	\end{array}	\right \}}	{\# \Gamma_x }	$$  and  $$ \Delta(f) = \sum_{D \in X^{(1)}} ( 1 - \delta_D(f)),$$
	where $X^{(1)}$ denotes the set of codimension $1$ points of $X$. \end{definition}

\begin{theorem}\label{thm2}Let $V$ be a smooth  projective variety over $\QQ$ equipped with a dominant morphism $f: V \to \PP^n$ 
with geometrically integral generic fibre and $\Delta(f)\neq 0$. Let $H$ be the usual Weil height on $\P^n$. 
Fix any  $  z \geq 0 $ and  $r \geq 0 $.
Then \begin{align*}
 \sum_{\substack{ x \in \PP^n(\QQ), H(x)\leq B\\ f^{-1}(x) \text{ smooth}}}
&\l(\frac{\omega_{f,z}(x)-\Delta(f)\exp( -z\Delta(f) ) \log \log B}{\sqrt{\Delta(f)\exp( -z\Delta(f) )\log \log B}}\r)^{r}
  \\  =& \mu_r   \l(1-\frac{2\Delta(f)z}{ \exp(\Delta(f)z)}  \r)^{r/2}
\#\l\{ x \in \PP^n(\Q) : H(x)\leq B\r\}(1+o(1)) , \end{align*} as $B\to \infty $,  where $\mu_r$ is the $r$-th moment of the standard normal distribution.\end{theorem}
 The case $z=0$   recovers Theorems 1.2-1.3 of Loughran--Sofos~\cite{loughrsof}.

Taking $r=2$ in Theorem~\ref{thm2} and~\cite[Theorem 1.2]{loughrsof} shows the following after a use of Chebychev's inequality: 
 \begin{corollary}\label{corol45} Let $f:V\to \P^n$ be a  morphism as in Theorem~\ref{thm2}. Fix any $z\geq 0 $.
Ordering $\P^n(\Q)$ by the usual Weil height, $100\%$ of fibres $f^{-1}(x)$ satisfy 
\[\l|\frac{\omega_{f,z}(x)}{\omega_f(x)}- \frac{\Delta(f)}{\mathrm e ^{ z\Delta(f) } }\r| \leq (\log \log H(x))^{-1/4}.\]\end{corollary}
\begin{remark}As the right hand side vanishes asymptotically, the corollary means  that for almost all fibres $f^{-1}(x)$,
the proportion of  gaps   in the sequence $\{\log \log p_i(x)\}_{i\geq 1 }$  exceeding $z$ is roughly constant, independently of the 
fibre!
\end{remark}

In our proof we use the arguments from \S\ref{s2}, 
where the uniform probability space $\N \cap [1,n]$ is replaced by $\{x\in \P^n(\Q):H(x) \leq B\}$.
The main number-theoretic we use is Proposition~\ref{warcturus}.
In sieve theory language this is  a level of distribution result for the fibres of $f$. 
The level of distribution it provides is less than $B^\epsilon$ for any constant $\epsilon>0$, which
is   well-known to be a problematic regime for   any sieve theory problem;
we overcome this  by extirpating   small primes $p\leq t_0(B)$ from  $\w \omega_{f,z}$, see~\eqref{wwwalc}.
\subsection{Proof of Theorem~\ref{thm2}}
For a prime $p$ we define\[\sigma_p:=\frac{\#\big\{x \in \P^n(\F_{p}): f^{-1}(x) \mbox{ is non-split}\big\}}{\#\PP^n(\FF_p)},
\]where  we use the term ``non-split'' in the sense of Skorobogatov \cite[Def.~0.1]{skor}. We then introduce the random variable
$\w{X}_p$ so that  \[ P[\widetilde X_p=1]=\sigma_p , \hspace{1cm} P[\widetilde X_p=0]=1-\sigma_p   \] and such that $\widetilde X_p$ are independent. 
We then define\[\widetilde B_p:=\widetilde X_p \prod_{\substack{ q \textrm{ prime } \\ p<q\leq p^{\exp(z)} } }(1-\widetilde X_q).\]
Furthermore, for any positive $N$ we define \[\widetilde S_N=\sum_{p\leq N } \widetilde B_p, \ \ \ \
\widetilde c_N:=E\l[\widetilde S_N\r]\ \ \textrm{ and } \ \ \widetilde s_N^2:=\textrm{Var}\l[\widetilde S_N\r].\]
Using~\cite[Proposition 3.6]{loughrsof} instead of Mertens' theorem
and the estimate $\sigma_p \ll 1/p$ from~\cite[Lemma 3.3]{loughrsof}, the arguments in Lemma~\ref{lem: first lem} 
 can be modified to yield
 \beq{wmertens product}{E[\w B_p]=\exp( -z\Delta(f) ) \sigma_p +O\l( \frac{1}{p \log p }\r)  ,} 
 \beq{wwmeanlem}{\w c_N=\Delta(f)\exp( -z\Delta(f) )\log \log N+O(1)} and 
\beq{wmeanvar}{ \w s_N^2 =\l(1-\frac{2\Delta(f)z}{ \exp(\Delta(f)z)} \r) \frac{\Delta(f)\log \log N}{ \exp(\Delta(f)z)} +O(1) .}
Next, the proof of Lemma~\ref{lem:678lem} goes through easily upon replacing $B_p $ by $\w B_p$ 
owing to the inequality $E[\w B_p]\leq E[\w X_p] =\sigma_p \ll 1/p$. Replacing $S_N$ by $\w S_N$ in the statement of Proposition~\ref{prop:stein_lem}
we see that the proof goes through by replacing $Z_p$ by $\w Z_p:=(\w B_p-E[\w B_p])/\w s_N$. Finally, using all the analogues of results in~\S\ref{s2} that we mentioned so far allows one to modify the arguments of the proof of Proposition~\ref{prop:moments_stein}
to obtain the following result: \begin{proposition}\label{propwwwmoments_stein} Fix $z\geq 0 $ and a positive integer  $r $. 
Then we have \[ \lim_{N\to \infty}E\l[\l(\frac{\w S_N-\w c_N}{\w s_N}\r)^r   \r] =\mu_r,\] where $\mu_r$ is the $r$-th moment of the standard normal distribution. 
\end{proposition} This concludes the probabilistic part of the proof of Theorem~\ref{thm2}.
The number theoretic   part requires the Fundamental lemma of sieve theory and the following:
\begin{proposition}\label{warcturus}  Keep the setting  of Theorem~\ref{thm2}. Then there exist constants $\delta >1, A>0 
$ that depends on $V$ and $f$ with the following property. Let $Q \in \NN$ with $p \nmid Q$ for all $p \leq A$. 
Then for all $\epsilon>0$ and $Q\leq B^{1/6}$ we have  \[\#\left\{x\in \P^n(\Q):		\begin{array}{l} H(x)\leq B,  f^{-1}(x) \textrm{ smooth}\\
			f^{-1}(x)(\Q_p)=\emptyset \  \forall \  p\mid Q \end{array}	\right\} 		 =   c_n B^{n+1}\prod_{p \mid Q} \sigma_{p}    + O 
			\left(  \frac{\delta^{\omega(Q)} B^{n+1}}{Q\min \{p\mid Q\} }	\right),	\] where the implied constant is independent of $B$ and $Q$. 
			\end{proposition} \begin{proof} By~\cite[Proposition 3.4]{loughrsof} there exist $\alpha >0 , d\in \N $ such that the left hand side is at most 
\[  c_nB^{n+1}  \Big(\prod_{p \mid Q} (\sigma_{p} + \alpha/p^2)\Big) + O\left( (4d)^{\omega(Q)}   	( Q^{2n+1} B + Q B^n(\log B)^{[1/n]} ) \right).	\] 
while, it exceeds a similar quantity with $\alpha $ replaced by $-\alpha$. As shown in~\cite[Lemma 3.7]{loughrsof} we have 
\[ \prod_{p \mid Q} (\sigma_{p} + \alpha/p^2)= \prod_{p \mid Q} \sigma_{p}  + O\left( \frac{(2\alpha d)^{\omega(Q)}}{Q \min\{p: p\mid Q\}}	\right) .\]
This is satisfactory by defining   $\delta=2+\max\{4d, 2\alpha d\}$. Finally,
  \[	  Q^{2n+1} B + Q B^{n} \log B \ll  \frac{B^{n+1}}{Q^2} \leq \frac{B^{n+1}}{Q \min\{p\mid Q\}}\]
owing to 
$Q\leq B^{1/6}$.
 \end{proof}
Our next task is to show that the moments of a truncated version of $\omega_{f,z}$ are asymptotically Gaussian.
For this we shall follow the arguments in \S\ref{s:justif}, where $\Omega_n =\N\cap [1,n]$ is replaced by the uniform discrete probability space
 $$\w \Omega_B=\{x\in \P^n(\Q): H(x) \leq B, f^{-1}(x) \textrm{ smooth}\}$$
for $B>0$.  The condition that $f^{-1}(x) \textrm{ smooth}$ is included in the definition of $\w \Omega_B$ to make $\omega_{f,z}(x)$ well-defined for each $x\in \w \Omega_B$ .
Choosing a polynomial which vanishes on the singular locus of $f$ we see that 
$$\#\{x\in \P^n(\Q): H(x) \leq B, f^{-1}(x) \textrm{ not smooth}\}=O(B^n).$$ Then the standard result 
\[ \#\{x\in \P^n(\Q): H(x) \leq B\}=c_nB^{n+1}+O(B^n(\log B)^{[1/n]}) ,\] where $c_n=2^n/\zeta(n+1)$,  shows that  
\[ \#\w \Omega_B=c_n B^{n+1}+O\l(B^n (\log B)^{[1/n] }\r) .\] We furthermore let for $x\in \P^n(\Q)$, \[
\w \delta_{p,z}(x) :=\begin{cases} \label{wwdeltapz}
1, & \text{ if } f^{-1}(x)(\Q_p)=\emptyset  \text{ and } f^{-1}(x)(\Q_q)\neq \emptyset  \text{ for every prime } q\in (p,p^{\exp(z) }],
\\  0, &  \text{ otherwise.} \end{cases}\] We shall choose any   two functions  $t_0,t_1:(0,\infty )\to (0,\infty )$ satisfying 
\beq{eq:optimcho}{
1<t_0(B)<t_1(B)<B,\lim_{B\to\infty} t_0(B)=\lim_{B\to\infty} t_1(B)=\infty.
} They will be chosen   optimally   later.
The analogue of~\eqref{def:walc} in our setting 
is defined as  
\beq{wwwalc}{\w \omega_{z,B}(x)=\sum_{t_0(B)<p\leq t_1(B)}\w \delta_{p,z}(x).}
We obtain  for $r\in \N$, \beq{eq:wwcanasm}{E_{ x\in \w \Omega_B}\l[\w \omega_{z,B}(x)^r \r] =
\sum_{u=1}^r \sum_{\substack{ r_1,\ldots, r_u \in \N \\ r_1+\ldots +r_u=r }} \frac{r!}{r_1!\cdots r_u!} 
\sum_{\substack{ t_0(B)< p_1 < \cdots <p_u \leq t_1(B)\\ p_{i+1} > p_i^{\exp(z) } \ \forall i}} \hspace{-0,5cm}
E_{ x\in \w \Omega_B}\l[\w \delta_{p_1,z}(x) \cdots \w\delta_{p_u,z}(x) \r] ,} where we added the assumption $p_{i+1} > p_i^{\exp(z) } \ \forall i$ 
since otherwise the expectation in the right hand side vanishes. 

Let us now define the function $m_B:\P^n(\Q)\to \N$ given by \[m_B(x):=\prod_{t_0(B)<p \leq t_1(B) \atop f^{-1}(x)(\Q_p)=\emptyset } p.\]
Letting $\w x$ be the product of all primes $p\leq t_1(B)$ we note that 
$m_B(x) \leq \w x$. Now let $\c P$ be the set of all primes in  $\bigcup_{i=1}^u (p_i, p_i^{\exp(z)}]$  and for $m\in \N$ let 
\[\w a_m:= \begin{cases}  \#\{x\in \w\Omega_B: m_B(x)=m\}/\#\w \Omega_B, & \text{ if } m\equiv 0 \md{p_1\cdots p_u},
\\  0, &  \text{ otherwise.} \end{cases}\]  
 This gives  $$ E_{ x\in \w \Omega_B}\l[\w \delta_{p_1,z}(x) \cdots \w\delta_{p_u,z}(x) \r] 
=
 \sum_{ \substack{  m\leq \w x  \\ p\in \c P \Rightarrow p\nmid m }}\w a_m  .$$ We shall use   Lemma~\ref{lem:fund} with $a_m : =\w a_m  , \kappa=\Delta(f) $,
$$ X=  \prod_{i=1}^u \sigma_{p_i}   ,\ \
 g(d)=\prod_{p\mid d }\sigma_p, \ \ D=B^{1/10}, \ \ 
y= t_1(B)^{2\exp(z)}. $$ 
The assumption $0\leq g(p)<1$ is satisfied here due to $\sigma_p\ll 1/p$ and $p>t_0(B) \to \infty $.
Note that for square-free $d$  that is only divisible by primes in $\c P$ we have 
\[r_d=-g(d)X + \sum_{m\leq \w x \atop m\equiv 0 \md d } \w a_m .\] Assuming that 
\beq{eq:t1t0bnd}{\log t_1(B)=o(\log B)} we see that when  $d\leq D$, one has 
\beq{eq:tlt1}{
dp_1\cdots p_u \leq 
dt_1(B)^u \leq Dt_1(B)^u
=B^{\frac{1}{10}+\frac{\log t_1(B)}{\log B}}
\leq B^{1/6}
} for all large $B$. This allows us to  employ Proposition~\ref{warcturus} with $Q=dp_1\cdots p_u$   to obtain 
\[\sum_{m\leq \w x \atop m\equiv 0 \md d } \hspace{-0,3 cm}
\w a_m =\frac{c_nB^{n+1} X g(d) }{\#\w\Omega_B}
+O\l(\frac{\delta^{u+\omega(d) }B^{n+1}  }{\#\w\Omega_B d p_1^2 p_2 \cdots p_u   }\r)
=Xg(d) +O_u\l(\frac{(\log B)^{[1/n]}}{B}+\frac{\delta^{ \omega(d) }  }{  d p_1^2 \prod_{i=2}^u   p_i   }\r)
,\] where we used $\#\w\Omega_B=c_n B^{n+1}+O(B^n(\log B)^{[1/n]})$ and $X g(d) \ll 1 $. 
The inequality~\eqref{eq:tlt1} shows that 
\[
\frac{  d p_1^2 \prod_{i=2}^u   p_i   }{\delta^{ \omega(d) }  }
\leq \l(d   \prod_{i=1}^u   p_i  \r)^2 \leq B^{1/3} \leq \frac{B}{\log B} 
,\]
hence,  
\[r_d=-
Xg(d)+\sum_{m\leq \w x,   m\equiv 0 \md d } 
\w a_m  \ll
 \frac{\delta^{ \omega(d) }  }{  d p_1^2 \prod_{i=2}^u   p_i   }
.\] This shows that the error term occurring in~\eqref{eq:givv} is   
\[\ll 
\frac{X}{ \mathrm e^{s}}
\prod_{p<y  \atop p\in\c P}(1-\sigma_p)
+ \sum_{d\leq B \atop p\mid d \Rightarrow p\in \c P } \frac{|\mu(d)| \delta^{ \omega(d) }  }{  d p_1^2 \prod_{i=2}^u   p_i  }
\leq 
\frac{X}{ \mathrm e^{s}}+\frac{1}{p_1^2 \prod_{i=2}^u   p_i } \prod_{p\in \c P } \l(1+\frac{1}{p}\r)^\delta   .\]
The product over $p\in \c P$ equals \[\prod_{i=1}^{u} \prod_{p_i < p \leq p_i ^{\exp(z) } } \l(1+\frac{1}{p}\r)^\delta\ll \prod_{i=1}^{u} 
\l(\frac{\log (p_i ^{\exp(z) })}{\log p_i }\r)^\delta  \ll 1 .\] Furthermore,   the estimates $p_1 >t_0(B)$ and  $X\ll 1/(p_1\cdots p_u)  $ 
show that 
\[
\frac{X}{ \mathrm e^{s}}+\frac{1}{p_1^2 \prod_{i=2}^u   p_i } \prod_{p\in \c P } \l(1+\frac{1}{p}\r)^\delta  
 \ll \frac{1}{p_1\cdots p_u } \frac{1}{\min \{ \mathrm e^{s} , t_0(B) \} } .\]   The main term occurring in Lemma~\ref{lem:fund} is   
\[X\prod_{p\in \c P }(1-\sigma_p)=\prod_{i=1}^u\l(\sigma_{p_i} \prod_{p_i< p \leq p_i^{\exp(z) }} (1-\sigma _p ) \r),\]
hence,  the expectation $E_{ x\in \w \Omega_B}\l[\w \delta_{p_1,z}(x) \cdots \w\delta_{p_u,z}(x) \r]$ in the right-hand side of~\eqref{eq:wwcanasm}
equals  \[\prod_{i=1}^u\l(\sigma_{p_i} \prod_{p_i< p \leq p_i^{\exp(z) }} (1-\sigma _p ) \r)+O\l(\frac{1}{p_1\cdots p_u } \frac{1}{\min \{ \mathrm e^{s} , t_0(B) \} }\r) .\] 
 Injecting this into~\eqref{eq:wwcanasm} produces the error term\[\ll_r\frac{1}{\min \{ \mathrm e^{s} , t_0(B) \} }
\sum_{u=1}^r   \sum_{   p_1 < \cdots <p_u \leq t_1(B) }  \frac{1}{p_1\cdots p_u } \ll_r  \frac{(\log \log t_1(B) )^r}{\min \{ \mathrm e^{s} , t_0(B) \} } .\]
Following arguments similar to the ones in the proof of Lemma~\ref{lem:fund} the main term is 
\[ \sum_{u=1}^r \sum_{\substack{ r_1,\ldots, r_u \in \N \\ r_1+\ldots +r_u=r }} \frac{r!}{r_1!\cdots r_u!} 
\sum_{\substack{ t_0(B)< p_1 < \cdots <p_u \leq t_1(B)\\ p_{i+1} > p_i^{\exp(z) } \ \forall i}} 
\prod_{i=1}^u \sigma_{p_i} \prod_{p_i< p \leq p_i^{\exp(z) }} (1-\sigma _p )   =E\l[   T_B^r\r],\] where \[   T_B:=\sum_{t_0(B)< p \leq t_1(B) } \w B_p.\]
We have shown that for all $r\in \N$ one has 
\[\l|E_{ x\in \w \Omega_B}\l[\w \omega_{z,B}(x)^r\r]-E\l[   T_B^r\r]  \r|\ll_r \frac{(\log \log t_1(B) )^r}{\min \{ \mathrm e^{s} , t_0(B) \} } .\]
Noting that $T_B= \w S_{t_1(B)}-\w S_{t_0(B)}$ gives 
\[E\l[   T_B^r\r]=E\l[    \w S_{t_1(B)}^r\r]+O_r\l(\max_{0\leq k \leq r-1 }  E\l[    \w S_{t_0(B)}^{r-k}    \w S_{t_1(B)}^k\r]\r).\] 
and the Cauchy--Schwarz inequality  shows that 
$$
E\l[   T_B^r\r]=E\l[    \w S_{t_1(B)}^r\r]+O_r\l(\max_{0\leq k \leq r-1 }
   E\l[    \w S_{t_0(B)}^{2( r-k ) }   \r]^{1/2}
 E\l[       \w S_{t_1(B)}^{2 k } \r]^{1/2}
 \r).$$   
Since $0\leq \w B_p\leq \w X_p $, we infer that $0\leq \w S_N \leq \sum_{p\leq N } \w X_p$, hence,  
\[ E\l[    \w S_{N}^r\r] \leq E\l[   \l( \sum_{p\leq N } \w X_p \r)^r\r]=
\sum_{u=1}^r \sum_{\substack{ r_1,\ldots, r_u \in \N \\ r_1+\ldots +r_u=r }} \frac{r!}{r_1!\cdots r_u!} 
\sum_{\substack{   p_1 < \cdots <p_u \leq N }} \prod_{i=1}^u E[\w X_p^{r_i} ]  
. \] But $E[\w X_p^{r_i} ] = E[\w X_p ] =\sigma_p\ll 1/p$, hence, $E\l[    \w S_{N}^r\r] \ll (\log \log N)^r $.
Hence, \[\max_{0\leq k \leq r-1 } 
E\l[    \w S_{t_0(B)}^{2( r-k ) }   \r]^{1/2}
E\l[       \w S_{t_1(B)}^{2 k } \r]^{1/2}
\ll
\max_{0\leq k \leq r-1 } (\log \log t_0(B))^{r-k} (\log \log t_1(B) )^k, \] which is $\ll (\log \log t_0(B)) (\log \log t_1(B) )^{r-1}.$ Hence, 
\[\l|E_{ x\in \w \Omega_B}\l[\w \omega_{z,B}(x)^r\r]-E\l[    \w S_{t_1(B)}^r\r]  \r|\ll_r \frac{(\log \log t_1(B) )^r}{
\min \l\{ \frac{\log \log t_1(B) }{\log \log t_0(B)},\mathrm e^{s} , t_0(B) \r\} } .\]
Therefore, 
\[ \l|  E_{ \w \Omega_B}\l[\l(\frac{\w \omega_{z,B}(x)-\w c_{t_1(B)} }{\w s_{t_1(B)} }\r)^k\r]  -   E \l[\l(\frac{S_N-\w c_{t_1(B)} }{\w s_{t_1(B)} }\r)^k\r] \r| 
\]is \[ \ll (\w s_{t_1(B)})^{-k}
\sum_{r=0}^k  \l|\w c_{t_1(B)} \r| ^{k-r}\l( E_{  \w \Omega_B}\l[\w \omega_{z,B} ^r \r]-E\l[S_{\w t_1(B)}^r \r] \r)
,\] which, by~\eqref{wwmeanlem}-\eqref{wmeanvar} is 
\[ \ll (\log \log t_1(B) )^{-r}  \frac{(\log \log t_1(B) )^r}{\min \l\{ \frac{\log \log t_1(B) }{\log \log t_0(B)},\mathrm e^{s} , t_0(B) \r\}  }\ll
\frac{1}{\min \l\{ \frac{\log \log t_1(B) }{\log \log t_0(B)},\mathrm e^{s} , t_0(B) \r\}  } . \]
This vanishes asymptotically as long as we 
assume that 
\beq{eq:furthbnndd}{ 
\log \log t_0(B) =o(\log \log t_1(B)).}
This is due to~\eqref{eq:t1t0bnd} which implies that  
\[
s=\frac{\log D}{\log y  }=\frac{1}{20 \exp(z)} \frac{\log B}{\log t_1(B)}\to+\infty.
\]
We have therefore shown that, subject to~\eqref{eq:optimcho}-\eqref{eq:t1t0bnd}-\eqref{eq:furthbnndd}, one has 
\[E_{ \w \Omega_B}\l[\l(\frac{\w \omega_{z,B}(x)-\w c_{t_1(B)} }{\w s_{t_1(B)} }\r)^k\r] \to \mu_k. \]
The concluding arguments follow those in Lemma~\ref{lem:mnts23}, the only difference being dealing with primes $p\leq t_0(B)$.
Recall from~\cite[Lemma 3.2, part (2)]{loughrsof} that there exists  a constant $A>0$ and a homogeneous $F\in \Z[x_0,\ldots, x_n]$ (both of which depend only on $f$) with the property that for all primes $p$ and $x\in \P^n(\Q)$ with $f^{-1}(x) $ smooth
and $f^{-1}(x) (\Q_p)=\emptyset$, one has $p\mid F(x)$. Then  \[0\leq   \omega_{f, z}(x)- \w \omega_{  z,B}(x) \leq  \sum_{p\leq t_0(B) } 1  
+\sum_{p> t_1(B) \atop f^{-1}(x) (\Q_p)=\emptyset} 1  \leq t_0(B) + \#\{p\mid F(x): p>t_1(B) \} .\]  For $z>1$ and $m \in \mathbb N$ we have $\#\{p\mid m : p>z\} \leq (\log m )/(\log z)$. For $x\in \w \Omega_B$ we have $H(x)\leq B$, thus, $\log |F(x)|\ll \log B$. In particular,  
\[  \omega_{f, z}(x)= \w \omega_{ z,B}(x) +O\l(t_0(B)+\frac{\log B}{\log t_1(B)}\r),\] where the implied constant is independent of $B,z$ and $x$.
Combined with arguments similar to the ones in Lemma~\ref{lem:mnts23} we obtain 
\[\lim_{B\to \infty} \frac{ \max \l\{  \l|   ( \omega_{f, z}(x)-\Delta(f)\mathrm e ^{-z\Delta(f) }\log \log B ) - ( \w \omega_{  z,B}(x)-\w c_{t_1(B)}  )   \r| :x\in \w \Omega_B\r\} } {\sqrt{\log \log B } }=0 ,\]
as long as \beq{eq:megscol}{
t_0(B)=o(\sqrt{\log \log B } )
\ \ \textrm{ and } \ \ 
\frac{\log B}{\log t_1(B)}=o(\sqrt{\log \log B } ).}
The proof of Theorem~\ref{thm2}
concludes by adapting 
the  arguments in \S\ref{prfmainthm} 
to  the current setting. This can be achieved as long as  
  we   assume that 
\beq{eq:alsomnsgtd}{\frac{\log \log t_1(B)}{\log \log B}\to 1 
\ \ \textrm{ and } \ \ 
\log \log B-\log \log t_1(B)=o(  \sqrt{\log \log B} )  
 } and it now remains to find functions $t_0(B)$ and $ t_1(B)$ that satisfy all assumptions~\eqref{eq:optimcho}-\eqref{eq:t1t0bnd}-\eqref{eq:furthbnndd}-\eqref{eq:megscol}-\eqref{eq:alsomnsgtd}. This can be done by  
choosing $t_0(B)$ and $t_1(B)$ so that 
\[t_0(B)=\log \log \log  B
\ \ \textrm{ and } \ \ 
 \frac{ \log t_1(B) }{\log B}=\frac{\log \log \log B}{\sqrt{\log \log B}}
.\]

\end{document}